\newtheorem{theorem}{Theorem}[section]
\newtheorem{lemma}[theorem]{Lemma}
\newtheorem{corollary}[theorem]{Corollary}
\newtheorem{definition}[theorem]{Definition}
\newtheorem{remark}[theorem]{Remark}
\newtheorem{example}[theorem]{Example}
\newcommand{\Z}{\mathbb{Z}}
\renewcommand{\ker}{\operatorname{Ker}}
\newcommand{\id}{\operatorname{id}}
\newcommand{\Sym}{\operatorname{Sym}}
\newcommand{\aut}{\operatorname{Aut}}
\newcommand{\ret}{\operatorname{Ret}}
\newcommand{\soc}{\operatorname{Soc}}
\newcommand{\Aut}{\operatorname{Aut}}
\newcommand{\Ret}{\operatorname{Ret}}
\newcommand{\gr}{\operatorname{gr}}
\newenvironment{proof}{\par\noindent{ Proof.}}{$\qed$\par\bigskip}
\newcommand{\qed}{\enspace\vrule  height6pt  width4pt  depth2pt}
\begin{document}
\title{Indecomposable solutions of the Yang--Baxter equation of square-free cardinality\thanks{The first author was partially
supported by the grant MINECO PID2020-113047GB-I00 (Spain). }}
\author{F. Ced\'o \thanks{corresponding author} \and J. Okni\'{n}ski
}
\date{}

\maketitle


\vspace{30pt} \noindent Keywords: Yang--Baxter equation,
set-theoretic solution,
 indecomposable solution,  multipermutation solution, brace\\

\noindent 2010 MSC: Primary 16T25, 20B15, 20F16 \\

\begin{abstract}
Indecomposable involutive non-degenerate set-theoretic solutions
$(X,r)$ of the Yang--Baxter equation of cardinality $p_1\cdots
p_n$, for different prime numbers $p_1,\ldots, p_n$, are studied.
It is proved that they are multipermutation solutions of level
$\leq n$. In particular, there is no simple solution of a
non-prime square-free cardinality. This solves a problem stated in
\cite{CO21} and provides a far reaching extension of several
earlier results on indecomposability of solutions. The proofs are
based on a detailed study of the brace structure on the
permutation group $\mathcal G(X,r)$ associated to such a solution.
It is proved that $p_1,\ldots, p_n$ are the only primes dividing
the order of $\mathcal{G}(X,r)$. Moreover, the Sylow
$p_i$-subgroups of $\mathcal{G}(X,r)$ are elementary abelian
$p_i$-groups and if $P_i$ denotes the Sylow $p_i$-subgroup of the
additive group of the left brace $\mathcal{G}(X,r)$, then there
exists a permutation $\sigma\in\Sym_n$ such that $P_{\sigma(1)},
\, P_{\sigma(1)}P_{\sigma(2)}, \dots ,$
$P_{\sigma(1)}P_{\sigma(2)}\cdots P_{\sigma(n)}$ are ideals of the
left brace $\mathcal{G}(X,r)$ and $\mathcal{G}(X,r)=P_1P_2\cdots
P_n$.  In addition, indecomposable solutions of cardinality
$p_1\cdots p_n$ that are multipermutation of level $n$ are
constructed, for every nonnegative integer $n$.
\end{abstract}

\section{Introduction}

The quantum Yang--Baxter equation (called YBE, for short),
originating from \cite{Baxter} and \cite{Yang}, has been an object
of a very extensive study in the recent decade. On one hand,
beyond its significance in mathematical physics, this is due to
the key role it plays in the foundations of the theory of quantum
groups and Hopf algebras, \cite{K}. On the other hand, the theory
developed for the so called set-theoretic solutions of this
equation (introduced by Drinfeld in \cite{drinfeld}) has lead to
several intriguing algebraic structures, and to a development of
new methods, that turned out to be very fruitful and also quite
fascinating on their own. In consequence, several deep relations
to a variety of different mathematical contexts have been
discovered, \cite{RumpSurvey}. A lot of effort has been
concentrated on the approach originating from the seminal papers
\cite{ESS}, of Etingof, Schedler and Soloviev, and \cite{GIVdB},
of Gateva-Ivanova and Van den Bergh, in the context of the so
called involutive non-degenerate solutions. One of the key lines
of attack is based on the very natural notion of indecomposable
solutions. This is because of the hope that the developed methods
might lead to the construction and classification of all
solutions. However, in general, indecomposable set-theoretic
solutions $(X,r)$ of the YBE turned out to be very difficult to
construct and classify,
 \cite{CCP,CPR,Jedl-Pilit-Zam,Jedl-Pilit-Zam2021,Ramirez,rump2020,SmokSmok}.
Crucial methods developed in this area are based, in particular,
on the permutation group $\mathcal G (X,r)\subseteq \Sym_X$
naturally associated to a solution $(X,r)$. This group carries a
natural structure of a brace, a powerful tool discovered by
Rump, \cite{Rump1,R07,RumpSurvey}, see also \cite{CedoSurvey}.
Another key idea coming from \cite{ESS}, strongly related to the
group $\mathcal G (X,r)$, is based on the retract $\Ret (X,r)$ of
a solution $(X,r)$, and on the induced notion of a
multipermutation solution. Roughly speaking, multipermutation
solutions have a controllable level of complexity, and are
considered easier to handle. This approach has attracted a lot of
attention  and allowed answering several questions, see for
example \cite{BCV,CJOComm,GIC}.

An important result of Rump established decomposability of  an
interesting class of solutions, \cite{Rump1}. Very recent results
in this area are based on a variety of ideas. In particular, on
the application of one-generator braces \cite{SmokSmok,Rump2020},
on the diagonal permutation methods \cite{Mora-Sastriq,
ramirez-vendramin}, the so called cabling process
\cite{lebed_and_co}, on arithmetical restrictions on the order of
the permutation group $\mathcal G (X,r)$ associated to the
solution \cite{Smokt}, or the approach based on general
constructions of braces, studied in \cite{BCJ} and applied in
\cite{Jedl-Pilit-Zam2021,Ramirez}. Indecomposable solutions that
happen to be multipermutational of small level have also been
extensively studied,
\cite{Jedl-Pilit-Zam,Jedl-Pilit-Zam2021,rump2022}.

In this paper we study finite indecomposable, involutive,
non-degenerate, set-theoretic solutions $(X,r)$ of the YBE of
square-free cardinality. One of our main motivations comes from
problems concerning the so called simple solutions, introduced in
\cite{V}. Namely, by \cite[Proposition~4.1]{CO21}, if $(X,r)$ is a
simple solution of the YBE and $|X|>2$, then $(X,r)$ is
indecomposable. Moreover, by \cite[Proposition~4.2]{CO21}, if
$|X|$ is not a prime, then $(X,r)$ is irretractable. In
\cite[Theorem 4.12]{CO21} it is proved that for any integers
$m,n>1$, there exists a simple solution $(X,r)$
     of the YBE of cardinality $m^2n$. Since indecomposable solutions of
     the YBE of prime cardinality
     are simple by \cite[Theorem 2.13]{ESS}, the following question, stated in
     \cite[Question 7.5]{CO21}, is natural:  Does there exist a finite simple
     solution $(X,r)$ of the YBE such that $|X|=p_1p_2\cdots p_n$, for
     $n>1$ and distinct primes $p_1,p_2,\dots ,p_n$?
     In this paper we answer this question in negative.
     Actually, the main result of this paper is much stronger, and it
     seems somewhat surprising. It reads as follows.
\begin{theorem}  \label{main-intro}
Let $(X,r)$ be an indecomposable, involutive, non-degenerate,
set-theoretic solution of the Yang--Baxter equation on the set $X$
of square-free cardinality. Then $(X,r)$ is a multipermutation
solution.
\end{theorem}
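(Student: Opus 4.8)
The plan is to translate ``multipermutation'' into a nilpotency property of the associated left brace and then to force that property using the arithmetic of its additive group. Write $B=\mathcal{G}(X,r)$; by indecomposability $B$ acts faithfully and transitively on $X$. I would start from the standard criterion that a finite involutive non-degenerate solution is a multipermutation solution if and only if its permutation brace is right nilpotent (that is, $B^{(m+1)}=0$ for some $m$, where $B^{(1)}=B$, $B^{(k+1)}=B^{(k)}*B$ and $a*b=\lambda_a(b)-b$), and I would package the argument as an induction on $|X|$. The engine of the induction is the retract: $\Ret(X,r)$ has permutation brace $B/\soc(B)$, where the socle $\soc(B)=\ker\lambda=\{a\in B:\lambda_a=\id\}$ is an ideal (so $\soc(B)*B=0$) that is nonzero exactly when $(X,r)$ is retractable. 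Since the retraction relation is $\mathcal{G}$-equivariant, its classes form a block system, so $\Ret(X,r)$ is indecomposable of square-free cardinality dividing $|X|$; and since right nilpotency of $B/\soc(B)$ lifts to $B$ (if $B^{(m)}\subseteq\soc(B)$ then $B^{(m+1)}\subseteq\soc(B)*B=0$), the theorem reduces to the single claim that $\soc(B)\neq 0$ whenever $|X|>1$.

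This is precisely the point at which square-freeness is indispensable: simple braces of composite order have trivial socle and do arise for non-square-free cardinalities, such as the solutions of cardinality $m^2n$ constructed in \cite{CO21}. To manufacture a nonzero socle I would analyse the finite abelian group $(B,+)$ through its primary decomposition $(B,+)=\bigoplus_i P_i$, and first pin down the arithmetic: the only primes dividing $|B|$ are $p_1,\dots,p_n$, and every Sylow subgroup of $B$ is elementary abelian. Transitivity of the action gives at once that each $p_i$ divides $|B|$; the reverse inclusion and the elementary-abelian conclusion are the delicate part and must come from controlling how the additive Sylow subgroups act on $X$, exploiting that $|X|=p_1\cdots p_n$ is square-free so that no prime can occur to a higher power in the resulting orbit/block analysis.

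Granting the arithmetic, $(B,+)=P_1\oplus\cdots\oplus P_n$ and $B=P_1P_2\cdots P_n$ multiplicatively. I would then produce, after a suitable reordering $\sigma\in\Sym_n$, the chain $0=I_0\subset I_1\subset\cdots\subset I_n=B$ with $I_k=P_{\sigma(1)}\cdots P_{\sigma(k)}$, each $I_k$ an ideal of $B$ and each factor $I_k/I_{k-1}$ a brace with elementary abelian additive $p_{\sigma(k)}$-group. The crucial further step is to show that each layer is central in the corresponding quotient, i.e.\ $I_k*B\subseteq I_{k-1}$; this is genuine content, since coprimality alone does not make a $\lambda$-action of $p_{\sigma(k)}$-power order trivial on the coprime part. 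Once it is in hand, the bottom layer satisfies $I_1*B\subseteq I_0=0$, so $I_1\subseteq\soc(B)$ and $\soc(B)\neq0$, closing the induction; iterating down the chain gives $B^{(n+1)}=0$, so $B$ is right nilpotent and the multipermutation level is at most $n$.

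The main obstacle is the arithmetic control of $|B|$ together with the accompanying centrality of the layers. A transitive permutation group of degree $p_1\cdots p_n$ can a priori involve many other primes, so this step cannot rely on the action alone: it must use the full interaction of the two group structures of the brace with square-freeness. Establishing the ordering $\sigma$ for which all partial products $P_{\sigma(1)}\cdots P_{\sigma(k)}$ are ideals, and then upgrading ``ideal'' to the containment $I_k*B\subseteq I_{k-1}$, is the technical heart of the proof; this is exactly the detailed study of the brace structure on $\mathcal{G}(X,r)$ announced in the abstract.
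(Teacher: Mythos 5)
Your reduction is sound and matches the paper's strategy in spirit: since $\mathcal{G}(\Ret(X,r))\cong\mathcal{G}(X,r)/\soc(\mathcal{G}(X,r))$ and the retract of an indecomposable solution is indecomposable of cardinality dividing $|X|$ (Lemma \ref{CCP}), the theorem does reduce to showing that $\soc(\mathcal{G}(X,r))\neq\{1\}$ whenever $|X|>1$, i.e.\ that no indecomposable irretractable solution of square-free cardinality $>1$ exists. The gap is in everything after ``granting the arithmetic''. The facts you propose to grant --- that only $p_1,\dots,p_n$ divide $|\mathcal{G}(X,r)|$, that the Sylow subgroups are elementary abelian, and that there is a chain of ideals $P_{\sigma(1)}\cdots P_{\sigma(k)}$ with central layers --- are precisely the conclusions of Theorem \ref{multi}, which the paper proves \emph{only under the hypothesis that $(X,r)$ is already a multipermutation solution} (its proof begins by invoking $\soc(\mathcal{G}(X,r))\neq\{1\}$, which is exactly what you are trying to establish). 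Using these facts to manufacture a nonzero socle is therefore circular unless you supply an independent argument, and you do not: you explicitly label this step the ``technical heart'' and leave it open. Note also that the arithmetic claim cannot come from the permutation action together with square-freeness in the way you suggest: $\AGL(1,5)=\Z/(5)\rtimes\Z/(4)$ is a solvable transitive group of square-free degree $5$ whose order is divisible by $2$, so primes not dividing $|X|$ are not excluded by any orbit or block analysis; excluding them genuinely requires the brace structure and, in the paper, the multipermutation hypothesis.

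What the paper actually does at this point is different and is the real content. It runs a minimal-counterexample argument on the number $n$ of prime factors: Theorem \ref{squarefreedegree} (a purely group-theoretic structure theorem for solvable transitive groups of square-free degree, valid with no brace or multipermutation hypothesis) produces a normal elementary abelian $p_1$-subgroup $T_1$ and a chain $K_0\subseteq T_1\subseteq K_1\subseteq\cdots$; the quotient solution on the $T_1$-orbits is indecomposable with $n-1$ prime factors, hence multipermutation by minimality, so Theorem \ref{multi} and Lemma \ref{multi2} may legitimately be applied to the \emph{quotient}; Lemma \ref{key} (an abelian normal Sylow subgroup forces retractability) converts irretractability of $(X,r)$ into $T_1\neq P_1$; and the final contradiction is extracted from the self-centralizing property of the minimal normal subgroup of a solvable primitive group of prime degree (Remark \ref{Galois}). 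None of these ingredients appears in your outline, and without them (or a substitute) the plan does not close.
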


In particular, this result extends the description of all
indecomposable solutions of prime cardinality $p$, obtained in
\cite{ESS}. Namely, those are multipermutation solutions of level
$1$ and the associated permutation group $\mathcal G (X,r)$ is
cyclic of order $p$. This theorem  provides also a far reaching
extension of partial results of several authors on
indecomposability of $(X,r)$ in the case where $|X|=pq$ for two
different primes $p,q$,  \cite{CPR,lebed_and_co,Ramirez}.

Our second main result, Theorem~\ref{multi}, provides a
detailed information on the brace structure on $\mathcal G (X,r)$
for indecomposable solutions $(X,r)$ of square-free cardinality.
It is an important complement to Theorem~\ref{main-intro} and it
may become an essential tool in a further study of such
solutions.

     The paper is organized as follows. In Section 2 we recall all
     basic notions and results used in our approach. The structure of a left brace on the
     associated permutation group $\mathcal G
(X,r)$ plays a crucial role. In Section 3
     a detailed information on the structure of solvable transitive  permutation
     groups acting on a set of square-free cardinality is obtained.
      Since for every finite indecomposable solution $(X,r)$,  $\mathcal{G}(X,r)$
     is solvable and acts transitively on $X$, we deal with finite solvable transitive
     permutation groups of square-free degree, giving a transparent description of a chain
     of normal subgroups in Theorem \ref{squarefreedegree}.
     Then, in  Section 4, we study indecomposable
     multipermutation solutions of the YBE of cardinality $p_1\cdots p_n$ for
     different prime numbers $p_1,\dots ,p_n$.
     Theorem~\ref{multi}
     provides a deep insight into the structure of the corresponding
     left braces $\mathcal G(X,r)$. In particular, we prove that $p_1,\ldots, p_n$
     are the only primes dividing
     the order of $\mathcal{G}(X,r)$. Moreover, the Sylow
     $p_i$-subgroups of $\mathcal{G}(X,r)$ are elementary abelian
     $p_i$-groups and if $P_i$ denotes the Sylow $p_i$-subgroup of the
     additive group of the left brace $\mathcal{G}(X,r)$, then there
     exists a permutation $\sigma\in\Sym_n$ such that
     $P_{\sigma(1)}P_{\sigma(2)}\cdots P_{\sigma(i)}$ are ideals of the
     left brace $\mathcal{G}(X,r)$, for all $i=1,\dots , n$, and
     $\mathcal{G}(X,r)=P_1P_2\cdots
     P_n$.  In particular, Theorem~\ref{multi} answers Question 3.16 from
     \cite{ramirez-vendramin}  in affirmative in the case of solutions of square-free
     cardinality.
      Next, combining both results allows us to apply an inductive approach in order
      to prove our main result, stated in a more detailed way in Theorem~\ref{main}.
     Finally, a construction of a family of
     indecomposable solutions of cardinality $p_1\cdots p_n$ and multipermutation level $n$,
     for every non-negative integer $n$, is presented in Section 5.

\section{Preliminaries} \label{prelim}
Let $X$ be a non-empty set and  let  $r:X\times X \longrightarrow
X\times X$ be a map. For $x,y\in X$ we put $r(x,y) =(\sigma_x (y),
\gamma_y (x))$. Recall that $(X,r)$ is an involutive,
non-degenerate set-theoretic solution of the Yang--Baxter equation
if $r^2=\id$, all the maps $\sigma_x$ and $\gamma_y$ are bijective
maps from $X$ to itself and
  $$r_{12} r_{23} r_{12} =r_{23} r_{12} r_{23},$$
where $r_{12}=r\times \id_X$ and $r_{23}=\id_X\times \ r$ are maps
from $X^3$ to itself. Because $r^{2}=\id$, one easily verifies
that $\gamma_y(x)=\sigma^{-1}_{\sigma_x(y)}(x)$, for all $x,y\in
X$ (see for example \cite[Proposition~1.6]{ESS}).

\bigskip
\noindent {\bf Convention.} Throughout the paper a solution of the
YBE will mean an involutive, non-degenerate, set-theoretic
solution
of the Yang--Baxter equation. \\

It is well known, see for example \cite[Proposition
8,2]{CedoSurvey}, that a map $r(x,y) = (\sigma_x (y),
\sigma^{-1}_{\sigma_x(y)}(x))$, defined for a collection of
bijections $\sigma_x, x\in X$, of the set $X$, is a solution of
the YBE if and only if
$\sigma_{x}\sigma_{\sigma_{x}^{-1}(y)}=\sigma_{y}\sigma_{\sigma_{y}^{-1}(x)}$
for all $x,y\in X$.

A left brace is a set $B$ with two binary operations, $+$ and
$\circ$, such that $(B,+)$ is an abelian group (the additive group
of $B$), $(B,\circ)$ is a group (the multiplicative group of $B$),
and for every $a,b,c\in B$,
 \begin{eqnarray} \label{braceeq}
  a\circ (b+c)+a&=&a\circ b+a\circ c.
 \end{eqnarray}
In any left brace $B$  the neutral elements $0,1$ for the
operations $+$ and $\circ$ coincide. Moreover, there is an action
$\lambda\colon (B,\circ)\longrightarrow \aut(B,+)$, called the
lambda map of $B$, defined by $\lambda(a)=\lambda_a$ and
$\lambda_{a}(b)=-a+a\circ b$, for $a,b\in B$. We shall write
$a\circ b=ab$ and $a^{-1}$ will denote the inverse of $a$ for the
operation $\circ$, for all $a,b\in B$. A trivial brace is a left
brace $B$ such that $ab=a+b$, for all $a,b\in B$, i.e. all
$\lambda_a=\id$. The socle of a left brace $B$ is
$$\soc(B)=\{ a\in B\mid ab=a+b, \mbox{ for all
}b\in B \}.$$ Note that $\soc(B)=\ker(\lambda)$, and thus it is a
normal subgroup of the multiplicative group of $B$. The solution of
the YBE associated to a left brace $B$ is $(B,r_B)$, where
$r_B(a,b)=(\lambda_a(b),\lambda_{\lambda_a(b)}^{-1}(a))$, for all
$a,b\in B$ (see \cite[Lemma~2]{CJOComm}).

A left ideal of a left brace $B$ is a subgroup $L$ of the additive
group of $B$ such that $\lambda_a(b)\in L$, for all $b\in L$ and
all $a\in B$. An ideal of a left brace $B$ is a normal subgroup
$I$ of the multiplicative group of $B$ such that $\lambda_a(b)\in
I$, for all $b\in I$ and all $a\in B$. Note that
\begin{eqnarray}\label{addmult1}
ab^{-1}&=&a-\lambda_{ab^{-1}}(b)
\end{eqnarray}
 for all $a,b\in B$, and
    \begin{eqnarray} \label{addmult2}
     &&a-b=a+\lambda_{b}(b^{-1})= a\lambda_{a^{-1}}(\lambda_b(b^{-1}))= a\lambda_{a^{-1}b}(b^{-1}),
     \end{eqnarray}
for all $a,b\in B$. Hence, every left ideal $L$ of $B$ also is a
subgroup of the multiplicative group of $B$, and every  ideal $I$
of a left brace $B$ also is a subgroup of the additive group of
$B$. For example,  every Sylow subgroup of the additive group of a
finite left brace $B$ is a left ideal of $B$. Consequently, if a
Sylow subgroup of the multiplicative group $(B,\circ)$ is normal,
then it must be an ideal of the finite left brace $B$. It is also
known that $\soc(B)$ is an ideal of the left brace $B$ (see
\cite[Proposition~7]{R07}). Note that, for every ideal $I$ of $B$,
$B/I$ inherits a natural left brace structure.

Note that if $L_1$ and $L_2$ are left ideals of a left brace $B$,
then $L_1+L_2$ is a left ideal of $B$ and
    $$L_1L_2=L_1+L_2=L_2+L_1=L_2L_1.$$
     Moreover, if $L_1\cap L_2=\{ 1\}$, then every element $g\in L_1L_2$ is
     uniquely written as $g=g_1g_2$ with $g_1\in L_1$ and  $g_2\in L_2$.

A homomorphism of left braces is a map $f\colon B_1\longrightarrow
B_2$, where $B_1,B_2$ are left braces, such that $f(a b)=f(a)
f(b)$ and $f(a+b)=f(a)+f(b)$, for all $a,b\in B_1$. Note that the
kernel $\ker(f)$ of a homomorphism of left braces $f\colon
B_1\longrightarrow B_2$ is an ideal of $B_1$.

Let $B$ be a left brace. Suppose that $I$ is an ideal of $B$ and
$L$ is a left ideal of $B$ such that $I\cap L=\{0\}$ and $B=IL$.
Note that if $a\in I$ and $b\in L$, then, by \cite[Lemma
2]{CJOComm},
$bab^{-1}b=ba=\lambda_b(a)\lambda^{-1}_{\lambda_b(a)}(b)$ and $bb^{-1}ab=ab=\lambda_a(b)\lambda^{-1}_{\lambda_a(b)}(a)$. Since
$I\cap L=\{0\}$, we have that
\begin{equation}\label{semidirect1}
\lambda_a(b)=b\quad\mbox{and}\quad \lambda_b(a)=bab^{-1}
\end{equation}
for all $a\in I$ and $b\in L$. By the first equality of
(\ref{semidirect1}), we have that $ab=a+b$ for all $a\in I$ and
$b\in L$. Note that the multiplicative group of $B$ is the (inner)
semidirect product of the multiplicative groups of $I$ and $L$.
Furthermore,
\begin{equation}\label{semidirect2}
    a_1b_1+a_2b_2=a_1+b_1+a_2+b_2=(a_1+a_2)(b_1+b_2)
\end{equation}
for all $a_1,a_2\in I$ and $b_1,b_2\in L$. Let $\alpha\colon
(L,\circ)\longrightarrow \Aut(I,+,\circ)$ be the map defined by
$\alpha(b)(a)=\lambda_b(a)$ for all $a\in I$ and $b\in L$. By
(\ref{semidirect1}), $\alpha$ is well defined. Hence $\alpha$ is a
homomorphism of groups. By \cite[Section 3]{CedoSurvey} the
semidirect product of the left braces $I$ and $L$ via $\alpha$ is
the left brace $I\rtimes_{\alpha}L$, where the multiplicative
group is the semidirect product of the multiplicative groups of
$I$ and $L$ via $\alpha$ and the addition is defined by
$$(a_1,b_1)+(a_2,b_2)=(a_1+a_2,b_1+b_2)$$
for all $a_1,a_2\in I$ and $b_1,b_2\in L$. Now it is easy to see
that the map $f\colon I\rtimes_{\alpha}L\longrightarrow B$ defined
by $f(a,b)=ab$, for all $a\in I$ and $b\in L$, is an isomorphism
of left braces. For this reason it is said that the left brace $B$
is the inner semidirect product of the ideal $I$ and the left
ideal $L$.

Recall that if $(X,r)$ is a solution of the YBE, with
$r(x,y)=(\sigma_x(y),\gamma_y(x))$, then its structure group
$G(X,r)=\gr(x\in X\mid xy=\sigma_x(y)\gamma_y(x)$ for all $x,y\in
X)$ has a natural structure of a left brace such that
$\lambda_x(y)=\sigma_x(y)$, for all $x,y\in X$. The additive group
of $G(X,r)$ is the free abelian group with basis $X$. The
permutation group $\mathcal{G}(X,r)=\gr(\sigma_x\mid x\in X)$ of
$(X,r)$ is a subgroup of the symmetric group $\Sym_X$ on $X$.  The
map $x\mapsto \sigma_x$, from $X$ to $\mathcal{G}(X,r)$ extends to
a group homomorphism $\phi: G(X,r)\longrightarrow
\mathcal{G}(X,r)$ and $\ker(\phi)=\soc(G(X,r))$. Hence there is a
unique structure of a left brace on $\mathcal{G}(X,r)$ such that
$\phi$ is a homomorphism of left braces; this is the natural
structure of a left brace on $\mathcal{G}(X,r)$. In particular,
\begin{equation*}
    \sigma_{\sigma_x(y)}=\phi(\sigma_x(y))=\phi(\lambda_x(y))=\lambda_{\phi(x)}(\phi(y))=\lambda_{\sigma_x}(\sigma_y)
\end{equation*}
for all $x,y\in X$. An easy consequence is \cite[Lemma
2.1]{CJOprimit}, which says that
\begin{equation}\label{Lemma 2.1}
    \lambda_{g}(\sigma_y)=\sigma_{g(y)}
\end{equation}
for all $g\in\mathcal{G}(X,r)$ and all $y\in X$.

Let $(X,r)$ and $(Y,s)$ be solutions of the YBE. We write
$r(x,y)=(\sigma_x(y),\gamma_y(x))$ and
$s(t,z)=(\sigma'_t(z),\gamma'_z(t))$, for all $x,y\in X$ and $t,z\in
Y$. A homomorphism of solutions $f\colon (X,r)\longrightarrow (Y,s)$
is a map $f\colon X\longrightarrow Y$ such that
$f(\sigma_x(y))=\sigma'_{f(x)}(f(y))$ and
$f(\gamma_y(x))=\gamma'_{f(y)}(f(x))$, for all $x,y\in X$. Since
$\gamma_y(x)=\sigma^{-1}_{\sigma_x(y)}(x)$ and
$\gamma'_z(t)=(\sigma')^{-1}_{\sigma'_t(z)}(t)$, it is clear that
$f$ is a homomorphism of solutions if and only if
$f(\sigma_x(y))=\sigma'_{f(x)}(f(y))$, for all $x,y\in X$.

Note  that, by de defining relations of the structure group, every
homomorphism of solutions $f\colon (X,r)\longrightarrow (Y,s)$
extends to a unique homomorphism of groups $f\colon
G(X,r)\longrightarrow G(Y,s)$ that we also denote by $f$, and  it
is easily checked that it also is a homomorphism of left braces
and induces a homomorphism of left braces $\bar f\colon
\mathcal{G}(X,r)\longrightarrow\mathcal{G}(Y,s)$.

In \cite{ESS}, Etingof, Schedler and Soloviev introduced the
retract relation on solutions  $(X,r)$ of the YBE. This is the
binary relation $\sim$ on $X$ defined by $x\sim y$ if and only if
$\sigma_x=\sigma_y$. Then, $\sim$ is an equivalence relation and
$r$ induces a solution $\overline{r}$ on the set
$\overline{X}=X/{\sim}$. The retract of the solution $(X,r)$ is
$\Ret(X,r)=(\overline{X},\overline{r})$. Note that the natural map
$f\colon X\longrightarrow \overline{X}:x\mapsto \bar x$ is an
epimorphism of solutions from $(X,r)$ onto $\Ret(X,r)$.

Recall that a solution $(X,r)$ is  said to be irretractable if
$\sigma_x\neq \sigma_y$ for all distinct elements $x,y\in X$, that
is $(X,r)=\Ret(X,r)$; otherwise the solution $(X,r)$ is
retractable. Define $\ret^1(X,r)=\Ret(X,r)$ and, for every integer
$n>1$, $\Ret^{n}(X,r)=\Ret(\Ret^{n-1}(X,r))$. A solution $(X,r)$
is said to be a multipermutation solution if there exists a
positive integer $n$ such that $\Ret^n(X,r)$ has cardinality $1$.
In this case, the least positive integer $n$ such that
$\Ret^n(X,r)$ has cardinality $1$ is called the multipermutation
level of $(X,r)$.

\section{Permutation groups of indecomposable solutions}\label{sec4}
Let $(X,r)$ be a solution of the YBE. We say that $(X,r)$ is
indecomposable if $\mathcal{G}(X,r)$ acts transitively on $X$.
We write $r(x,y)=(\sigma_x(y),\gamma_y(x))$.

\begin{definition}
    A solution $(X,r)$ of the YBE is simple if $|X|>1$ and for every
    epimorphism $f:(X,r) \longrightarrow (Y,s)$ of solutions either $f$
    is an isomorphism or $|Y|=1$.
\end{definition}

In this context, the following result (Proposition~4.1 in
\cite{CO21} and Lemma~3.4 in \cite{CO22}) is crucial.

\begin{lemma}  \label{simple indec}
Assume that $(X,r)$ is a simple solution of the YBE. Then it is
indecomposable if $|X|>2$ and it is irretractable if $|X|$ is not
a prime number.
\end{lemma}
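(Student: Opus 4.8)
The plan is to prove the two assertions separately, each time constructing a canonical epimorphism of solutions out of $(X,r)$ and using simplicity to force a dichotomy. For the first assertion, assume $|X|>2$ and let $\approx$ be the equivalence relation on $X$ whose classes are the orbits of $\mathcal{G}(X,r)$. I would first observe that $\approx$ is automatically a congruence: since every generator $\sigma_x$ lies in $\mathcal{G}(X,r)$, the element $\sigma_x(y)$ lies in the orbit of $y$, so $\sigma_x(y)\approx y$ for all $x,y$. Consequently the quotient map $q\colon X\to X/{\approx}$ onto the orbit set is an epimorphism of solutions whose target carries the flip solution $\bar r([x],[y])=([y],[x])$; here I only need the criterion recalled in the preliminaries that $q$ is a solution homomorphism if and only if $q(\sigma_x(y))=\bar\sigma_{q(x)}(q(y))$, which holds with $\bar\sigma_{[x]}=\id$. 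By simplicity, $q$ is either an isomorphism or has a one-point image, and in the second case $\mathcal{G}(X,r)$ is transitive, i.e. $(X,r)$ is indecomposable, as desired.

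It remains to exclude the case that $q$ is an isomorphism. Then every orbit is a singleton, so $\sigma_x=\id$ for all $x$ and $(X,r)$ is the flip solution on $X$. For the flip solution every partition of $X$ is a congruence, since both $\sigma_x$ and $\gamma_y$ are the identity, so collapsing one two-element block produces an epimorphism onto a flip solution of cardinality $|X|-1$. As $|X|>2$, this cardinality is strictly between $1$ and $|X|$, contradicting simplicity. Hence $(X,r)$ must be indecomposable.

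For the second assertion, assume $|X|$ is not prime; together with $|X|>1$ this gives $|X|\ge 4>2$, so the first assertion applies. I would feed the canonical epimorphism $f\colon(X,r)\to\Ret(X,r)=(\bar X,\bar r)$ recorded in the preliminaries into simplicity: either $f$ is an isomorphism, whence $(X,r)$ is irretractable and we are done, or $|\bar X|=1$. In the latter case all the maps $\sigma_x$ equal a single permutation $\sigma$, and then $\gamma_y(x)=\sigma^{-1}_{\sigma_x(y)}(x)=\sigma^{-1}(x)$, so $r(x,y)=(\sigma(y),\sigma^{-1}(x))$ is a permutation solution with $\mathcal{G}(X,r)=\langle\sigma\rangle$. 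Indecomposability, already established, forces this cyclic group to act transitively, so $\sigma$ is an $n$-cycle with $n=|X|$.

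The contradiction now comes from $n$ being composite. Identifying $X$ with $\mathbb{Z}/n\mathbb{Z}$ and $\sigma$ with translation by $1$, and writing $n=dm$ with $1<d,m<n$, the cosets of the subgroup of order $m$ form a $\sigma$-invariant block system with $d$ blocks; collapsing the blocks yields an epimorphism of solutions onto the permutation solution on $\mathbb{Z}/d\mathbb{Z}$ given by the induced $d$-cycle, since $\sigma$ descends to translation by $1$ there, and this target is again involutive and non-degenerate. As $1<d<n$, neither alternative permitted by simplicity holds, a contradiction, so $|\bar X|=1$ is impossible and $(X,r)$ is irretractable. Throughout, the only points needing care are the verification that each collapsing map is genuinely an epimorphism of solutions, i.e. well-definedness of the induced $\sigma'$ on the quotient, and the bookkeeping of the boundary cardinalities ($|X|=2$, prime, and composite), which is precisely what explains why the hypotheses $|X|>2$ and ``$|X|$ not prime'' are the right ones.
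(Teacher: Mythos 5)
Your proof is correct and complete. The paper does not prove Lemma~\ref{simple indec} itself but imports it from \cite[Proposition~4.1]{CO21} and \cite[Lemma~3.4]{CO22}, and your argument --- mapping onto the orbit set equipped with the trivial (flip) solution to force transitivity, then feeding the retraction epimorphism into simplicity and ruling out a transitive cyclic $\mathcal{G}(X,r)$ on a composite number of points by reducing $\Z/(n)$ modulo a proper divisor --- is essentially the standard proof given in those references, up to the harmless variant of quotienting by the full orbit partition rather than a two-block decomposition.
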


The following observation is a generalization of a result about
indecomposable cycle sets obtained in \cite{CCP}.

\begin{lemma}\label{CCP}
Assume that $f:(X,r) \longrightarrow (Y,s)$ is an epimorphism of
solutions. Let $Y=\{ y_1,\ldots ,y_n\}$. Then $X_{i}=
f^{-1}(y_{i}), i=1,\ldots, n$, is a system of imprimitivity for
the action of the permutation group ${\mathcal G}(X,r)$ on $X$.
Moreover, if $x,x'\in X_{i}$ for some $i$, then $\sigma_x$ and
$\sigma_{x'}$ determine the same permutation of blocks
$X_1,\ldots, X_n$.  In particular, if $(X,r)$ is indecomposable,
then $(Y,s)$ is indecomposable and $|X_i|=|X_j|$ for all $i,j$.
\end{lemma}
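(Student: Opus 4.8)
The plan is to exploit the defining compatibility of a homomorphism of solutions recorded in Section~\ref{prelim}, namely $f(\sigma_x(z))=\sigma'_{f(x)}(f(z))$ for all $x,z\in X$. Since $f$ is surjective, the fibres $X_i=f^{-1}(y_i)$ partition $X$, so it remains to show that the generators $\sigma_x$ of $\mathcal{G}(X,r)$ permute these fibres. First I would fix $x\in X$ and a block $X_i$; for any $z\in X_i$ the identity above gives $f(\sigma_x(z))=\sigma'_{f(x)}(y_i)$, a value independent of the particular $z\in X_i$. Writing $y_{\pi(i)}=\sigma'_{f(x)}(y_i)$, this shows $\sigma_x(X_i)\subseteq X_{\pi(i)}$, where $\pi$ is the permutation of indices induced by the bijection $\sigma'_{f(x)}$ of $Y$.

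To upgrade the inclusion to an equality I would use that $\sigma_x$ is a bijection of $X$: given $u\in X_{\pi(i)}$ and setting $w=\sigma_x^{-1}(u)$, the same identity yields $\sigma'_{f(x)}(f(w))=f(\sigma_x(w))=f(u)=y_{\pi(i)}=\sigma'_{f(x)}(y_i)$, and injectivity of $\sigma'_{f(x)}$ forces $f(w)=y_i$, i.e. $w\in X_i$ and $u\in\sigma_x(X_i)$. Hence $\sigma_x(X_i)=X_{\pi(i)}$ for every $i$, so each generator $\sigma_x$ permutes the blocks, and therefore so does every element of $\mathcal{G}(X,r)=\gr(\sigma_x\mid x\in X)$. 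This is precisely the statement that $\{X_1,\dots,X_n\}$ is a system of imprimitivity. The same description of the induced block permutation settles the second claim at once: the permutation of blocks determined by $\sigma_x$ is the one corresponding to $\sigma'_{f(x)}$ under the bijection $X_i\leftrightarrow y_i$, so if $x,x'\in X_i$ then $f(x)=f(x')$ and hence $\sigma_x,\sigma_{x'}$ induce the common block permutation associated with $\sigma'_{f(x)}$.

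For the final assertions, assume $\mathcal{G}(X,r)$ acts transitively on $X$. A transitive action on $X$ induces a transitive action on any block system: given blocks $X_i,X_j$, choose $x\in X_i$, $x'\in X_j$ and $g\in\mathcal{G}(X,r)$ with $g(x)=x'$; then $g(X_i)$ is a block meeting $X_j$, hence $g(X_i)=X_j$. Identifying the blocks with $Y$ via $X_i\leftrightarrow y_i$, this transitivity transfers to $\mathcal{G}(Y,s)$ through the induced brace epimorphism $\bar f$, which is onto because every generator $\sigma'_{y_i}=\sigma'_{f(x)}=\bar f(\sigma_x)$ lies in its image; consequently $(Y,s)$ is indecomposable. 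Finally, since each $g\in\mathcal{G}(X,r)$ restricts to a bijection between any block and its image, transitivity on blocks gives $|X_i|=|g(X_i)|=|X_j|$, so all fibres have equal cardinality.

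I do not anticipate a genuine obstacle here; the only point requiring care is passing from the set-theoretic inclusion $\sigma_x(X_i)\subseteq X_{\pi(i)}$ to equality, which is where bijectivity of the maps $\sigma_x$ and $\sigma'_{f(x)}$ is essential, together with the bookkeeping that identifies the action of $\mathcal{G}(X,r)$ on the blocks with the action of $\mathcal{G}(Y,s)$ on $Y$.
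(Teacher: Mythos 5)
Your proposal is correct and follows essentially the same route as the paper: both arguments rest on the identity $f(\sigma_x(z))=\sigma'_{f(x)}(f(z))$ to show that each $\sigma_x$ permutes the fibres according to $\sigma'_{f(x)}$, deduce the second claim from $f(x)=f(x')$, and obtain equal block sizes and indecomposability of $(Y,s)$ from transitivity via the induced map $\bar f$. The only difference is cosmetic: you spell out the passage from $\sigma_x(X_i)\subseteq X_{\pi(i)}$ to equality using bijectivity, which the paper leaves implicit.
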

\begin{proof}
If $x,x'\in X_{i}$ then we know that $f(x)=f(x')$. Hence, if $z\in
X$ then we get
$$f(\sigma_z(x))=\sigma'_{f(z)}(f(x))=\sigma'_{f(z)}(f(x'))=f(\sigma_z(x')).$$
Therefore $\sigma_z(x),\sigma_z(x')\in X_{j}$ for some $j$. So
$\sigma_z$ determines a permutation $\sigma'_{z}$ of the subsets
$X_1, \ldots X_n$ and the first assertion follows. Now, let $y\in
X$. Then
$$f(\sigma_x(y))=\sigma'_{f(x)}(f(y))=\sigma'_{f(x')}(f(y))=f(\sigma_{x'}(y)).$$
Hence $\sigma_x(y),\sigma_{x'}(y)\in X_{k}$ for some $k$, which
proves the second assertion.

Assume that $(X,r)$ is indecomposable. This means that
$\mathcal{G}(X,r)$ is a transitive subgroup of $\Sym_X$. Hence,
for every $x\in X_i$ and $z\in X_j$, there exists $g\in
\mathcal{G}(X,r)$ such that $g(x)=y$. By the first assertion,
$g(X_i)=X_j$. Therefore $|X_i|=|X_j|$. Furthermore, if
$\bar{f}\colon \mathcal{G}(X,r)\longrightarrow \mathcal{G}(Y,r)$
is the homomorphism of groups induced by $f$, we have that
$\bar{f}(g)(y_i)=y_j$. Hence $(Y,s)$ is indecomposable and the
result follows.
\end{proof}

In this section we study indecomposable solutions $(X,r)$ of the
YBE such that $|X|=p_1\cdots p_n$, where $n>1$ and  $p_1, \dots
,p_n$ are $n$ distinct prime numbers. By \cite[Theorem
3.1]{CJOprimit} $(X,r)$ is not primitive, that is
$\mathcal{G}(X,r)$ is not a primitive subgroup of $\Sym_X$. Thus
$\mathcal{G}(X,r)$ is a transitive imprimitive group of degree
$p_1\cdots p_n$. By \cite[Theorem 2.15]{ESS} $G(X,r)$ is solvable
and hence $\mathcal{G}(X,r)$ is also solvable.

The following remark is part of a result that Huppert attributes
to Galois \cite[II, 3.2 Satz]{H}.

\begin{remark}\label{Galois}
    {\rm Let $G$ be a solvable primitive group of prime degree $p$.
        Then the stabilizer in $G$ of one point is a maximal subgroup $M$ of index $p$
        in $G$. Furthermore, $\bigcap_{g\in G}gMg^{-1}=\{ 1\}$. Let $P$ be a minimal normal
        subgroup of $G$. Then $PM=G$. Since $P$ is normal, $C_G(P)$ also is normal in $G$.
        Since $G$ is solvable, $P$ is abelian and $P\subseteq C_G(P)$. Note that
        $M\cap C_G(P)$ is normal in $G=PM$. But $M$ contains no non-trivial normal subgroup.
        Hence $M\cap C_G(P)=\{ 1\}$. Therefore $P=C_G(P)$ is a cyclic group of order $p$
        and $M$ acts faithfully by conjugation on $P$. Thus $G=PM$ is isomorphic to a
        subgroup of $\Z/(p)\rtimes \Aut(\Z/(p))$.}
\end{remark}

Our next result provides a convenient sufficient condition for
retractability of a solution $(X,r)$ in terms of a purely group
theoretic property of the multiplicative structure of the left
brace $\mathcal{G}(X,r)$.

\begin{lemma}\label{key}
    Let $(X,r)$ be a finite solution of the YBE. Suppose that $\mathcal{G}(X,r)$ has an
    abelian normal Sylow $p$-subgroup for some prime divisor $p$ of $|\mathcal{G}(X,r)|$.
    Then $(X,r)$ is retractable.
\end{lemma}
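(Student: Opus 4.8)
The plan is to reduce everything to showing that $\soc(B)\neq\{1\}$ for the left brace $B=\mathcal{G}(X,r)$, since retractability then drops out of (\ref{Lemma 2.1}): if $1\neq g\in\soc(B)$ then $\lambda_g=\id$, so $\sigma_{g(y)}=\lambda_g(\sigma_y)=\sigma_y$ for every $y\in X$, and choosing $y$ with $g(y)\neq y$ (possible as $g$ is a nontrivial permutation of $X$) produces two distinct points with the same associated permutation, i.e. $(X,r)$ is retractable. Thus the whole task becomes: manufacture a nonidentity socle element out of the abelian normal Sylow $p$-subgroup.

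First I would fix $P$, the abelian normal Sylow $p$-subgroup of $(B,\circ)$. Being a normal Sylow subgroup of the multiplicative group, $P$ is an ideal, hence also an additive subgroup whose order is the full $p$-part of $|B|$; so $P$ is simultaneously the Sylow $p$-subgroup of the abelian group $(B,+)$. Let $P'$ be the sum of the Sylow $q$-subgroups of $(B,+)$ for the primes $q\neq p$. Each summand is a left ideal, so $P'$ is a left ideal, and $P\cap P'=\{0\}$, $B=PP'$. Hence $B$ is the inner semidirect product of the ideal $P$ and the left ideal $P'$, and (\ref{semidirect1}) yields $\lambda_a(b)=b$ for all $a\in P$, $b\in P'$. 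Since $(B,+)=P\oplus P'$ and each $\lambda_a$ preserves $P$, for $a\in P$ we get $\lambda_a=\id$ exactly when $\lambda_a|_P=\id$. This identifies $\soc(B)\cap P$ with $\soc(P)$, the socle of the subbrace $P$, so it is enough to prove $\soc(P)\neq\{0\}$.

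For the last step I would exploit that $(P,\circ)$ is abelian. Writing $a\ast b=\lambda_a(b)-b$ for $a,b\in P$, the identity $\lambda_a(b)=-a+a\circ b$ together with $a\circ b=b\circ a$ gives at once that $\ast$ is symmetric, $a\ast b=b\ast a$. Now $\soc(P)=\{c\in P:\lambda_c|_P=\id\}=\{c:c\ast a=0\text{ for all }a\in P\}$, whereas the set of common fixed points of the group $\{\lambda_a|_P:a\in P\}$ acting on $(P,+)$ is $\{c:a\ast c=0\text{ for all }a\in P\}$; by symmetry of $\ast$ these coincide. But $\{\lambda_a|_P:a\in P\}$ is a homomorphic image of the $p$-group $(P,\circ)$, hence itself a $p$-group, acting on the nontrivial finite $p$-group $(P,+)$, so its fixed-point set is nonzero. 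Therefore $\soc(P)\neq\{0\}$, and the reduction above finishes the proof.

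I expect the heart of the matter to be this last paragraph, and it is where the hypothesis is genuinely used: a left brace of $p$-power order may well have trivial socle --- this is exactly what occurs for irretractable solutions --- so abelianness cannot be dropped. Its role is to make $\ast$ symmetric and thereby convert the socle, a ``left'' annihilator condition, into the ``right'' fixed-point set, which is automatically nonzero by the fixed-point theorem for $p$-groups; normality of $P$ enters separately, through the semidirect decomposition, to guarantee that a socle element of $P$ survives as a socle element of all of $B$. The only routine points needing a check are that $P$ is also the additive Sylow $p$-subgroup and that the hypotheses of (\ref{semidirect1}) hold, both immediate from Section~\ref{prelim}.
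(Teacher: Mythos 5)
Your proof is correct and follows essentially the same route as the paper's: both decompose $\mathcal{G}(X,r)$ as the inner semidirect product of the normal Sylow $p$-subgroup (an ideal) and the additive Hall $p'$-subgroup (a left ideal), and both reduce retractability to the nontriviality of the socle of that Sylow subbrace. The only difference is that where the paper invokes \cite[Proposition 3]{CJOComm} for $\soc(P)\neq\{0\}$, you supply a self-contained (and correct) argument via the symmetry of $\lambda_a(b)-b$ under commutativity of $\circ$ and the fixed-point theorem for $p$-groups.
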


\begin{proof}
    Let $T$ be an abelian normal Sylow $p$-subgroup of $\mathcal{G}(X,r)$ for some prime divisor $p$ of $|\mathcal{G}(X,r)|$. Since the
    Sylow subgroups of the additive
    group of the left brace $\mathcal{G}(X,r)$ are left ideals, it follows that
     $T$ is an ideal of the left brace $\mathcal{G}(X,r)$.  Let $C$ be the Hall
    $p'$-subgroup of the additive group of the left brace $\mathcal{G}(X,r)$. We have
    that $\mathcal{G}(X,r)=TC$ is
    the inner semidirect product (as left braces) of
    the ideal $T$ and the left ideal $C$. By using the structure of the semidirect
    product, we know that $t+c= tc$ for $t\in T, c\in C$, and consequently
    \begin{equation}
        \label{lambdaT} \lambda_t(t_1c_1)=-t+tt_1c_1=-t+tt_1+c_1=(-t+tt_1)c_1=\lambda_t(t_1)c_1
    \end{equation}
    for all $t,t_1\in T$ and $c_1\in C$. Since $T$ is a finite
    non-zero left brace with abelian multiplicative group, by
    \cite[Proposition 3]{CJOComm}, $\soc(T)\neq \{\id\}$. Let $t\in
    \soc(T)\setminus\{\id\}$. There exists $x\in X$ such that
    $t(x)\neq x$. Let $t_x\in T$ and $c_x\in C$ be such that
    $\sigma_x=t_xc_x$. By (\ref{Lemma 2.1}) and
    (\ref{lambdaT}), we have that
    $$\sigma_{t(x)}=\lambda_t(\sigma_x)=\lambda_t(t_xc_x)=\lambda_t(t_x)c_x=t_xc_x=\sigma_x.$$
    Therefore $(X,r)$ is retractable and the result follows.
\end{proof}

In order to deal with the general case of solutions $(X,r)$ of
square-free cardinality, it is convenient to start with a
description of the associated permutation groups.

\begin{theorem}\label{squarefreedegree}
    Let $p_1,\dots ,p_n$ be $n$ distinct prime numbers. Let $G$ be a solvable
    transitive subgroup of $\Sym_X$, where $X$ is a set and $|X|=p_1\cdots p_n$.
    Then there exist $\sigma\in\Sym_n$ and normal subgroups
    $$\{ 1\}=K_0\subseteq T_1\subseteq K_1\subseteq T_2\subseteq  K_2\subseteq \dots\subseteq T_n\subseteq  K_n=G$$
    of $G$ such that for every $i=1,\dots ,n$,
    \begin{itemize}
        \item[(i)]   $K_i/K_{i-1}$ is isomorphic to a
    subgroup of
    $$\left(\Z/(p_{\sigma(i)})\rtimes\Aut(\Z/(p_{\sigma(i)}))\right)^{\frac{p_1\cdots p_n}{p_{\sigma(1)}\cdots p_{\sigma(i)} }},$$
    $p_{\sigma(i)}$ is a divisor of $|K_i/K_{i-1}|$ and
    $T_i/K_{i-1}$ is the Sylow $p_{\sigma(i)}$-subgroup of $K_i/K_{i-1}$,
    \item[(ii)] $K_i=\{ g\in G\mid g(T_i(x))=T_i(x)$ for all $x\in X\}$, where $T_i(x)=\{ t(x)\mid t\in T_i\}$,
    \item[(iii)] $\mathcal{S}_i=\{ K_i(x)\mid x\in X\}$ is a system of imprimitivity of $G$  and $|\mathcal{S}_i|=\frac{p_1\cdots p_n}{p_{\sigma(1)}\cdots p_{\sigma(i)}}$.
    \end{itemize}
\end{theorem}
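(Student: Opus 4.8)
The plan is to argue by induction on $n$, peeling off one prime at a time by passing to the action on the orbits of a minimal normal subgroup. Write $m=p_1\cdots p_n$. For the base case $n=1$ the degree is the prime $p_1$, so the only systems of imprimitivity are trivial and $G$ is primitive; Remark~\ref{Galois} then applies directly, giving $G\hookrightarrow \Z/(p_1)\rtimes\Aut(\Z/(p_1))$ with minimal normal subgroup $P$ cyclic of order $p_1$. One sets $\sigma=\id$, $K_0=\{1\}$, $T_1=P$ (the Sylow $p_1$-subgroup, which is transitive) and $K_1=G$; all three properties are then immediate, since $\mathcal S_1=\{X\}$ and $T_1(x)=X$ for all $x$.

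For the inductive step I would first choose a minimal normal subgroup $N$ of $G$. As $G$ is solvable, $N$ is an elementary abelian $q$-group; being a nontrivial normal subgroup of the transitive group $G$, all its orbits have a common size which is a power of $q$ dividing the square-free number $m$, hence equal to $q$. This forces $q=p_{\sigma(1)}$ for some index $\sigma(1)$, and the $N$-orbits form a $G$-invariant partition $\mathcal B$ into $m/q$ blocks. Let $\pi\colon G\to\Sym_{\mathcal B}$ be the induced action and $L=\ker\pi$, the stabilizer of every block. I would set $K_1:=L$ and take $T_1$ to be the Sylow $q$-subgroup of $L$.

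The crucial point, and the step I expect to be the main obstacle, is that $T_1$ is \emph{normal in $G$}. Here I would restrict $L$ to a single block $B$: since $N\le L$ acts transitively on $B$ and $L$ is solvable, $L|_B$ is a solvable transitive group of prime degree $q$, so by Remark~\ref{Galois} it embeds into $\Z/(q)\rtimes\Aut(\Z/(q))$. Collecting these restrictions over all blocks embeds $L$ into $(\Z/(q)\rtimes\Aut(\Z/(q)))^{m/q}$, whose Sylow $q$-subgroup is the normal base product $(\Z/(q))^{m/q}$ of order coprime index. Intersecting, $T_1$ is a normal, hence unique and characteristic, Sylow $q$-subgroup of $L$; as $L\triangleleft G$ this yields $T_1\triangleleft G$. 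The same embedding gives property (i) for $i=1$ with exponent $m/q$, and because $N\le T_1\le L$ the $T_1$-orbits coincide with the blocks of $\mathcal B$, which gives $K_1=\{g\in G\mid g(T_1(x))=T_1(x)\ \forall x\}$ (property (ii)) and $\mathcal S_1=\mathcal B$ of cardinality $m/q$ (property (iii)).

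Finally I would feed $\bar G=G/L\le\Sym_{\mathcal B}$, which is solvable, transitive, of square-free degree $m/q$ with the $n-1$ remaining primes, into the induction hypothesis to obtain $\bar\sigma$ and a chain $\{1\}=\bar K_0\subseteq\bar T_1\subseteq\cdots\subseteq\bar K_{n-1}=\bar G$, and then pull it back by $K_{j+1}:=\pi^{-1}(\bar K_j)$, $T_{j+1}:=\pi^{-1}(\bar T_j)$, and $\sigma(j+1):=\bar\sigma(j)$; normality of these preimages is automatic. Property (i) transfers through the correspondence isomorphisms $K_{j+1}/K_j\cong\bar K_j/\bar K_{j-1}$ and $T_{j+1}/K_j\cong\bar T_j/\bar K_{j-1}$, the exponents matching because $(m/q)/(p_{\bar\sigma(1)}\cdots p_{\bar\sigma(j)})=m/(p_{\sigma(1)}\cdots p_{\sigma(j+1)})$. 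For (ii) and (iii) the remaining work is the orbit computation: since $L\le T_{j+1},K_{j+1}$ acts transitively on each block of $\mathcal B$, every orbit $T_{j+1}(x)$ (resp. $K_{j+1}(x)$) is exactly the union of the blocks lying in the corresponding $\bar T_j$-orbit (resp. $\bar K_j$-orbit). Property (ii) for $\bar G$ then upgrades to property (ii) for $G$, and each $\mathcal S_{j+1}$ is the coarsening of $\bar{\mathcal S}_j$ of the required cardinality. I expect the genuine difficulty to be concentrated in establishing the normal Sylow structure of $L$; the pull-back verifications become routine once the description $T_{j+1}(x)=\bigcup\bar T_j(B)$ is in hand.
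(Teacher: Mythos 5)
Your proposal is correct and follows essentially the same route as the paper's proof: take a minimal normal subgroup (elementary abelian, with orbits of prime size $q$ by square-freeness), let $K_1$ be the kernel of the action on the block system, embed $K_1$ blockwise into a product of copies of $\Z/(q)\rtimes\Aut(\Z/(q))$ via Remark~\ref{Galois} to see that its Sylow $q$-subgroup $T_1$ is characteristic in $K_1$ and hence normal in $G$, and then apply induction to the quotient acting on the blocks and pull the chain back. The only difference is the order in which the bottom layer and the inductive call are treated, which is immaterial.
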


\begin{proof}
      We shall prove the result by induction on $n$. For $n=1$, the result follows
    by Remark \ref{Galois}, taking as $T_1$ the Sylow $p_1$-subgroup of $G$, because every transitive group of prime degree is primitive.

    Suppose that $n>1$ and the result holds for $n-1$. Since $G$ is solvable, if $T$ is
    a minimal nontrivial normal subgroup of $G$, then $T$ is an elementary abelian
    $t$-subgroup for some prime divisor $t$ of $|G|$, and the orbits of $T$ in $X$
    form a system of imprimitivity of $G$. Hence $t\in \{ p_1,\dots ,p_n\}$.
    We may assume that $t=p_1$. Let $\mathcal{S}=\{ X_i\mid i=1,\dots ,p_2\cdots p_{n}\}$
    be the system of imprimitivity of $G$, where the $X_i=T(x_i)=\{ t(x_i)\mid t\in T\}$,
    for some $x_i\in X$, are the different orbits of $T$ in $X$.
    Let $G_{X_i}=\{ g\in G\mid g(X_i)=X_i\}$ for all $i=1,\dots ,p_2\cdots p_{n}$. Let
    $$K_1=\bigcap_{i=1}^{p_2\cdots p_{n}}G_{X_i}.$$
    Let $\psi\colon G\longrightarrow \Sym_{\mathcal{S}}$ be the map defined by
    $\psi(g)(T(x))=T(g(x))$ for all $g\in G$ and $x\in X$. It is easy to see that $\psi$
    is a homomorphism of groups and $\psi(G)$ is a transitive subgroup of
    $\Sym_{\mathcal{S}}$. Note that $\ker(\psi)=K_1$. By the inductive hypothesis,
    there exist $\sigma\in \Sym_{\{ 2,\dots ,n\}}$ and normal subgroups
    $$\ker(\psi)=K_1\subseteq T_2\subseteq K_2\subseteq \cdots \subseteq T_n\subseteq K_n=G$$
    of $G$ such that for every $i=2,\dots ,n$,
    \begin{itemize}
        \item[(i)]  $K_i/K_{i-1}$ is isomorphic to a
        subgroup of
        $$\left(\Z/(p_{\sigma(i)})\rtimes\Aut(\Z/(p_{\sigma(i)}))\right)^{\frac{p_2\cdots p_n}{p_{\sigma(2)}\cdots p_{\sigma(i)} }},$$
        $p_{\sigma(i)}$ is a divisor of $|K_i/K_{i-1}|$ and $T_i/K_{i-1}$ is the Sylow $p_{\sigma(i)}$-subgroup of $K_i/K_{i-1}$,
        \item[(ii)] $K_i=\{ g\in G\mid g(T_i(x))=T_i(x)$ for all $x\in X\}$, where $T_i(x)=\{ t(x)\mid t\in T_i\}=\{ g(x)\mid g\in K_i\}=K_i(x)$,
        \item[(iii)] $\mathcal{S}_i=\{ K_i(x)\mid x\in X\}$ is a system of imprimitivity of $G$  and $|\mathcal{S}_i|=\frac{p_2\cdots p_n}{p_{\sigma(1)}\cdots p_{\sigma(i)}}$.
    \end{itemize}
    Note that $K_1$ is a normal subgroup
    of $G$ such that $T\subseteq K_1$. Hence $p_1$ is a divisor of $|K_1|$.
     Since $T\subseteq K_1$, $K_1$ acts transitively on each $X_i$.
    Hence the image of the  homomorphism $\psi_i\colon K_1\longrightarrow \Sym_{X_i}$,
    defined by $\psi_i(g)(y)=g(y)$ for all $g\in K_1$ and $y\in X_i$, is a transitive
    subgroup of $\Sym_{X_i}$, and it is primitive because $|X_i|=p_1$ is prime.
    Hence $K_1/\ker(\psi_i)$ is a primitive group of degree $p_1$. By Remark \ref{Galois},
    $$K_1/\ker(\psi_i)\cong \Z/(p_1)\rtimes \langle \eta_i\rangle,$$
    where $\eta_i\in\Aut(\Z/(p_1))$. Note that $\bigcap_{i=1}^{p_2\cdots p_{n}}\ker(\psi_i)=\{ 1\}$.
    Hence $K_1$ is isomorphic to a subgroup of the direct product
    $$\prod_{i=1}^{p_2\cdots p_{n}}\left(\Z/(p_1)\rtimes \langle\eta_i\rangle\right).$$
     Let $T_1$ be the Sylow $p_1$-subgroup of $K_1$. Note that $T_1$ is characteristic in
     $K_1$, and thus it is normal in $G$. Furthermore,
     $T\subseteq T_1$ and $T(x)=T_1(x)=K_1(x)$ for all $x\in X$.
     Therefore the result follows by induction.
\end{proof}

\begin{remark}\label{KTP}
     Let $G$ be a solvable transitive subgroup of $\Sym_X$,
     where $X$ is a set of cardinality $p_1\cdots p_n$ and $p_1,\dots ,p_n$ are $n$
     distinct prime numbers. Let $T_i,K_i$ be the normal subgroups of $G$
     appearing in the statement of Theorem \ref{squarefreedegree}. Let $P_i$ be a Sylow
     $p_i$-subgroup of $G$. Then
        $$K_i(x)=K_i(T_i(x))=T_i(x) \quad\mbox{and}\quad T_i=(T_i\cap P_i)K_{i-1}$$
        for all $x\in X$ and all $i\geq 1$.
\end{remark}

\section{Indecomposable solutions of square-free cardinality.}

In this section we study the structure of the left brace
$\mathcal{G}(X,r)$ for indecomposable multipermutation solutions
$(X,r)$ of square-free cardinality. This will allow us to prove
that every indecomposable solution of the YBE of square-free
cardinality is a multipermutation solution. In particular, this
answers Question 7.5 from \cite{CO21} in the negative.

\begin{theorem}\label{multi}
Let $n$ be a positive integer. Let $p_1,\dots ,p_n$ be $n$
distinct prime numbers. Assume that $(X,r)$ is an indecomposable
multipermutation solution of the YBE of cardinality
 $p_1\cdots p_n$. Let $P_i$ be the Sylow $p_i$-subgroup of the additive group of
 the left brace $\mathcal{G}(X,r)$, for $i=1,\dots ,n$. Then the following conditions hold.
\begin{itemize}
    \item[(i)] $\soc(\mathcal{G}(X,r))$ is a Hall $\pi$-subgroup of the additive group of the left brace $\mathcal{G}(X,r)$ for some non-empty subset $\pi$ of $\{ p_1,\dots ,p_n\}$.
    \item[(ii)]  $\mathcal{G}(X,r)=P_1\cdots P_n$.
    \item[(iii)] $P_i$ is a trivial brace over an elementary abelian $p_i$-group for every $i=1,\dots ,n$.
    \item[(iv)] There exists a permutation $\sigma\in\Sym_n$ such that
    $$P_{\sigma(1)}, \, P_{\sigma(1)}P_{\sigma(2)}, \dots , P_{\sigma(1)}P_{\sigma(2)}\cdots P_{\sigma(n)}$$
    are ideals of the left brace $\mathcal{G}(X,r)$, $P_{\sigma(1)}\subseteq\soc(\mathcal{G}(X,r))$ and $$(P_{\sigma(1)}\cdots P_{\sigma(i)})/(P_{\sigma(1)}\cdots P_{\sigma(i-1)})\subseteq \soc(\mathcal{G}(X,r)/(P_{\sigma(1)}\cdots P_{\sigma(i-1)}))$$
    for every $1<i\leq n$.
    \item[(v)] Let $K_i=\{ g\in \mathcal{G}(X,r)\mid g(P_{\sigma(1)}\cdots P_{\sigma(i)}(x))=P_{\sigma(1)}\cdots P_{\sigma(i)}(x) \mbox{ for all }x\in X \}$.
    Then $K_i=P_{\sigma(1)}\cdots P_{\sigma(i)}$ for every $1\leq i\leq n$.
\end{itemize}
\end{theorem}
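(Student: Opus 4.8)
The plan is to run the whole argument off the socle, using as the central identification that $\mathcal{G}(\Ret(X,r))\cong\mathcal{G}(X,r)/\soc(\mathcal{G}(X,r))$. I would prove this first: by \eqref{Lemma 2.1} an element $g$ lies in $\soc(\mathcal{G}(X,r))=\ker\lambda$ exactly when $\sigma_{g(y)}=\lambda_g(\sigma_y)=\sigma_y$ for every $y$, i.e. when $g$ preserves every retract class setwise; since the additive group of $\mathcal{G}(X,r)$ is generated by the $\sigma_x$, this is the kernel of the induced brace epimorphism onto $\mathcal{G}(\Ret(X,r))$. I would also record that $\soc(\mathcal{G}(X,r))$ is a \emph{trivial} brace (for $a,b\in\soc$ one has $a\circ b=a+\lambda_a(b)=a+b$), hence abelian under either operation, and that the multipermutation hypothesis says precisely that the socle series $\{1\}=\soc_0\subseteq\soc_1\subseteq\cdots$, with $\soc_k/\soc_{k-1}=\soc(\mathcal{G}(X,r)/\soc_{k-1})$, exhausts $\mathcal{G}(X,r)$, because iterating the identification gives $\mathcal{G}(\Ret^{k-1}(X,r))=\mathcal{G}(X,r)/\soc_{k-1}$ and $\Ret^{m}(X,r)$ is a point iff $\soc_m=\mathcal{G}(X,r)$.

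The heart of the proof is a self-contained analysis of a single layer. Fix $k$ and put $G'=\mathcal{G}(X,r)/\soc_{k-1}=\mathcal{G}(\Ret^{k-1}(X,r))$, a transitive group on a set of square-free cardinality dividing $|X|$. The layer $S:=\soc_k/\soc_{k-1}=\soc(G')$ is a nontrivial abelian normal subgroup (nontrivial because for $k\le m$ the corresponding retract still has more than one element and, being a multipermutation solution, is retractable). Its orbits form a block system with all blocks of one size $d$, and since $S$ is abelian and transitive on each orbit it acts regularly there, so $S$ embeds into a direct product of copies of the regular cyclic group $\Z/(d)$ (cyclic because $d$ is square-free). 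Hence every prime dividing $|S|$ divides $d$, and every Sylow subgroup of $S$ is elementary abelian. The block sizes of the successive retractions are pairwise coprime with product $|X|$, and each $d$ divides the corresponding block size; as every $p_i$ divides $|\mathcal{G}(X,r)|=\prod_k|\soc_k/\soc_{k-1}|$, the primes $p_1,\dots,p_n$ are \emph{partitioned} among the layers, each dividing exactly one. This yields (ii) at once (no other primes occur, so the abelian additive group is the internal product $P_1+\cdots+P_n=P_1\cdots P_n$ of its Sylow left ideals), (iii) (each $P_i$ meets $\soc_{k-1}$ trivially and maps isomorphically onto the elementary abelian $p_i$-part of the one layer it divides), and (i) (with $\pi$ the set of primes dividing the first layer, $\soc(\mathcal{G}(X,r))=\soc_1=\prod_{p_i\in\pi}P_i$ is the Hall $\pi$-subgroup of the additive group, and $\pi\neq\emptyset$).

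For (iv) I would choose $\sigma$ so that $\sigma(1),\dots$ lists first the primes of layer $1$, then those of layer $2$, and so on; then each partial product $P_{\sigma(1)}\cdots P_{\sigma(i)}$ is the preimage in $\mathcal{G}(X,r)$ of a product of Sylow subgroups of a single layer $\soc_k/\soc_{k-1}$ sitting on top of $\soc_{k-1}$. Each such Sylow subgroup is characteristic in the abelian ideal $\soc_k/\soc_{k-1}$, hence normal in the multiplicative group of $G'$ and invariant under every $\lambda_a$ (which preserves the ideal), so it is an ideal of $G'$; pulling back, each $P_{\sigma(1)}\cdots P_{\sigma(i)}$ is an ideal of $\mathcal{G}(X,r)$. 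The containment $P_{\sigma(1)}\subseteq\soc(\mathcal{G}(X,r))$ holds since $\sigma(1)$ lies in layer $1$, and $(P_{\sigma(1)}\cdots P_{\sigma(i)})/(P_{\sigma(1)}\cdots P_{\sigma(i-1)})\subseteq\soc(\mathcal{G}(X,r)/(P_{\sigma(1)}\cdots P_{\sigma(i-1)}))$ because a Sylow subgroup of a layer lies in $\soc(G')$, and an element with $\lambda=\id$ modulo $\soc_{k-1}$ still has $\lambda=\id$ after the further quotient by $P_{\sigma(1)}\cdots P_{\sigma(i-1)}\supseteq\soc_{k-1}$.

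Finally, (v) asks that each ideal $N_i:=P_{\sigma(1)}\cdots P_{\sigma(i)}$ coincide with the setwise stabilizer $K_i$ of the block system formed by its own orbits. The inclusion $N_i\subseteq K_i$ is immediate; the reverse inclusion, i.e. faithfulness of $\mathcal{G}(X,r)/N_i$ on those blocks, is where I expect the real work. Here I would invoke Theorem~\ref{squarefreedegree} and Remark~\ref{KTP}, whose $K_i$ are by construction exactly such setwise stabilizers with $T_i/K_{i-1}$ the relevant Sylow subgroup and $K_i(x)=T_i(x)$, and match that normal chain with the Sylow-product chain layer by layer. \textbf{The main obstacle is precisely this matching:} a priori Theorem~\ref{squarefreedegree}(i) permits $K_i/K_{i-1}$ to carry the $\Aut(\Z/(p_{\sigma(i)}))$-factors, whose orders $p_{\sigma(i)}-1$ may be divisible by other $p_j$, so the two filtrations need not obviously agree. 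The levers that force these ``automorphism'' parts to be absorbed into the Sylow products are that the additive group is a product of elementary abelian Sylow left ideals and that $P_{\sigma(1)}\subseteq\soc(\mathcal{G}(X,r))$ makes the bottom Sylow act by $\lambda=\id$ (as in \eqref{lambdaT}), together with Lemma~\ref{key}, which guarantees that every quotient in the socle filtration is again retractable so that the filtration genuinely reaches the top and the indexing of layers is forced.
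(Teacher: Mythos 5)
Your treatment of (i)--(iv) is a genuinely different route from the paper's and is essentially sound: the paper proves the theorem by induction on $n$, peeling off one Sylow subgroup $P$ of the socle at a time and passing to the quotient solution on the set of $P$-orbits, whereas you work the whole socle series $\soc_1\subseteq\soc_2\subseteq\cdots$ at once via $\mathcal{G}(\Ret(X,r))\cong\mathcal{G}(X,r)/\soc(\mathcal{G}(X,r))$, and your prime-partition count (every prime dividing $|\soc_k/\soc_{k-1}|$ divides the regular-orbit size $d_k$, which divides the retract-fiber size $|\Ret^{k-1}(X,r)|/|\Ret^{k}(X,r)|$, and these fiber sizes are pairwise coprime with product $|X|$) delivers (i)--(iv) cleanly. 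One small repair: ``multipermutation, hence retractable'' does not by itself give $\soc(G')\neq\{1\}$ (retractability says $y\mapsto\sigma_y$ is not injective, not that some $\lambda_g=\id$); you need the exhaustion of the socle series: if $\soc(G')=\{1\}$ then every term of the series is trivial, so $G'=\{1\}$ and the retract is already a point.

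The genuine gap is (v). You correctly flag it as the hard part, but you have misdiagnosed the obstacle and left it open. The issue is not matching your chain against Theorem~\ref{squarefreedegree} (the paper does not invoke that theorem in this proof at all); it is that your entire analysis lives on the retract tower, while (v) concerns block systems on $X$ itself. The setwise stabilizer of the retract-fiber system is exactly $\soc(\mathcal{G}(X,r))$, but nothing in your argument forces the socle (or $\soc_k$) to act \emph{transitively} on each fiber, so the $\soc_k$-orbits on $X$ may be strictly smaller than the fibers of $X\to\Ret^{k}(X,r)$; consequently, control of the action of $\mathcal{G}(\Ret^{k-1}(X,r))$ on $\Ret^{k-1}(X,r)$ does not bound the stabilizer $K_i$ of the $P_{\sigma(1)}\cdots P_{\sigma(i)}$-orbit system on $X$. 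The paper's key step, absent from your proposal, is the direct computation that any $g$ stabilizing every orbit of $P$ (the Sylow $p_1$-subgroup of the socle) satisfies $\lambda_g(\sigma_x)=\sigma_{g(x)}=\sigma_{t(x)}=\sigma_x$ for some $t\in P\subseteq\soc(\mathcal{G}(X,r))$, hence lies in the socle, and then embeds in $(\Z/(p_1))^{p_2\cdots p_n}$, forcing the stabilizer to equal $P$; the general case of (v) then falls out of the induction on $n$ applied to the quotient solution on the $P$-orbits. Without an argument of this type, $K_i$ could a priori be strictly larger than $P_{\sigma(1)}\cdots P_{\sigma(i)}$, and since (v) is what later lets the paper match this chain with the one from Theorem~\ref{squarefreedegree}, the gap is not cosmetic.
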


\begin{proof}
    We shall prove the result by induction on $n$. For $n=1$, by \cite[Theorem 2.13]{ESS}, $\mathcal{G}(X,r)$ is the trivial brace of order $p_1$, and the result follows in this case.

    Suppose that $n>1$ and the result holds for all indecomposable multipermutation solutions of the YBE of cardinality $q_1\cdots q_m$, where $q_1,\dots ,q_m$ are $m$ distinct prime numbers and $m<n$ is a positive integer.
    Since $(X,r)$ is a multipermutation solution, by \cite[Theorem 4.13]{CJKVAV}
    the solutions $(G(X,r),r_G)$ and $(\mathcal{G}(X,r),r_{\mathcal{G}})$
    of the YBE associated to the left braces $G(X,r)$ and $\mathcal{G}(X,r)$, respectively,
    also are multipermutation solutions. Hence $\soc(\mathcal{G}(X,r))\neq\{\id\}$.
    Recall that the additive and multiplicative structures on the socle coincide.
    Let $p$ be a prime divisor of $|\soc(\mathcal{G}(X,r))|$. Let $P$ be the Sylow
    $p$-subgroup of $\soc(\mathcal{G}(X,r))$.
    Since $\soc(\mathcal{G}(X,r))$ is an abelian normal subgroup of the permutation
    group $\mathcal{G}(X,r)$, we have that $P$ is a normal subgroup of $\mathcal{G}(X,r)$.
    Since the orbits of $X$ by the action of $P$ form a system of imprimitivity of
    $\mathcal{G}(X,r)$, it follows that $p\in\{ p_1,\dots ,p_n\}$. We may assume that $p=p_1$. Let $P(x)=\{ g(x)\mid g\in P\}$ and let $\mathcal{S}=\{ P(x)\mid x\in X\}$. We have that $|P(x)|=p_1$ for all $x\in X$
 and $|\mathcal{S}|=p_2\cdots p_n$. Let $K=\{ g\in\mathcal{G}(X,r)\mid g(P(x))=P(x)$ for all $x\in X\}$.  Clearly $P\subseteq K$. Let
 $\varphi\colon \mathcal{G}(X,r)\longrightarrow \Sym_{\mathcal{S}}$ be the map defined by $\varphi(g)(P(x))=gP(x)=P(g(x))$ for all $x\in X$.
 Then $\ker(\varphi)=K$. Let $g\in K$ and $x\in X$. Since $g(P(x))=P(x)$, there exists $t\in P$ such that $g(x)=t(x)$. By (\ref{Lemma 2.1}),
     \begin{equation} \label{fixed}
     \lambda_g(\sigma_x)=\sigma_{g(x)}=\sigma_{t(x)}=\lambda_t(\sigma_x)=\sigma_x,
     \end{equation}
    because $t\in P\subseteq\soc(\mathcal{G}(X,r))$.
    Hence $K\subseteq\soc(\mathcal{G}(X,r))$.
    Let $\psi_x\colon K\longrightarrow \Sym_{P(x)}$ be the map defined
    by $\psi_x(g)(y)=g(y)$ for all $g\in K$ and $y\in P(x)$.
    Then $K/\ker(\psi_x)$ is an abelian transitive group of degree $p_1=|P(x)|$.
    Hence $K/\ker(\psi_x)\cong\Z/(p_1)$.
    Since $\bigcap_{P(x)\in\mathcal{S}}\ker(\psi_x)=\{\id\}$, we have
    that $K$ is isomorphic to a subgroup of $(\Z/(p_1))^{p_2\cdots p_n}$.
 As $K$ is a $p_1$-group, $K=P$ is
 an ideal of the left brace $\mathcal{G}(X,r)$.
 Let $s\colon \mathcal{S}\times\mathcal{S}\longrightarrow \mathcal{S}\times\mathcal{S}$
 be the map defined by $s(P(x),P(y))=(P(\sigma_x(y)),P(\sigma^{-1}_{\sigma_x(y)}(x)))$
 for all $ x,y\in X$. Let $g,h\in P$.
 In view of (\ref{fixed}), we have that
    $$\sigma_{g(x)}(h(y))=(\lambda_g(\sigma_x))(h(y))=\sigma_x(h(y))=\sigma_xh\sigma_x^{-1}(\sigma_x(y))\in P(\sigma_x(y)).$$
    Hence $s$ is well-defined. In order to see that $(\mathcal{S},s)$ is a solution of the YBE, it is enough to prove that

    \begin{equation}\label{P(x)}
        \sigma_{P(x)}\sigma_{\sigma_{P(x)}^{-1}(P(y))}=\sigma_{P(y)}\sigma_{\sigma_{P(y)}^{-1}(P(x))}\end{equation}
    for all $x,y\in X$, where $\sigma_{P(x)}(P(y))=P(\sigma_x(y))$. Let $x,y,z\in X$. We have that
    \begin{align*}
        \sigma_{P(x)}\sigma_{\sigma_{P(x)}^{-1}(P(y))}(P(z))&=\sigma_{P(x)}\sigma_{P(\sigma_x^{-1}(y))}(P(z))\\
        &=\sigma_{P(x)}(P(\sigma_{\sigma_x^{-1}(y)}(z)))=P(\sigma_x\sigma_{\sigma_x^{-1}(y)}(z)).
    \end{align*}
Since $\sigma_x\sigma_{\sigma_x^{-1}(y)}=\sigma_y\sigma_{\sigma_y^{-1}(x)}$, (\ref{P(x)}) follows. Hence $(\mathcal{S},s)$ is a solution of the YBE.
Let $\psi\colon (X,r)\longrightarrow (\mathcal{S},s)$ be the map defined by
$\psi(x)=P(x)$. Note that
$$\psi(\sigma_x(y))=P(\sigma_x(y))=\sigma_{P(x)}(P(y))=\sigma_{\psi(x)}(\psi(y))$$
for all $x,y\in X$. Hence $\psi$ is a surjective homomorphism of
solutions. Since $(X,r)$ is indecomposable, by Lemma \ref{CCP},
$(\mathcal{S},s)$ also is indecomposable. Furthermore $\psi$
induces a surjective homomorphism of left braces
$\overline{\psi}\colon\mathcal{G}(X,r)\longrightarrow
\mathcal{G}(\mathcal{S},s)$ such that
$\overline{\psi}(\sigma_x)=\sigma_{P(x)}$ for all $x\in X$. Since
$(X,r)$ is a multipermutation solution, by \cite[Lemma
4]{CJOComm}, $(\mathcal{S},s)$ is a multipermutation solution. Let
$g\in \mathcal{G}(X,r)$. There exist $x_1,\dots,x_k\in X$ such
that $g=\sigma_{x_1}\cdots\sigma_{x_k}$.  We have that
\begin{align*}gP(x)&=P(g(x))=P(\sigma_{x_1}\cdots\sigma_{x_k}(x))=\sigma_{P(x_1)}\cdots\sigma_{P(x_k)}(P(x))\\
    &=\overline{\psi}(\sigma_{x_1}\cdots\sigma_{x_k})(P(x))=\overline{\psi}(g)(P(x))\end{align*}
for all $x\in X$. Hence $\ker(\overline{\psi})=K=P$ is an ideal of
the left brace $\mathcal{G}(X,r)$. Let $\overline{P}_i$ be the
Sylow $p_i$-subgroup of the additive group of the left brace
$\mathcal{G}(\mathcal{S},s)$. Since $|\mathcal{S}|=p_2\cdots p_n$,
by the inductive hypothesis we have that
\begin{itemize}
    \item[(i)] $\soc(\mathcal{G}(\mathcal{S},s))$ is a Hall $\pi$-subgroup
    of the additive group of the left brace $\mathcal{G}(\mathcal{S},s)$ for some non-empty subset $\pi$ of $\{ p_2,\dots ,p_n\}$.
    \item[(ii)]  $\mathcal{G}(\mathcal{S},s)=\overline{P}_2\cdots \overline{P}_n$.
    \item[(iii)] $\overline{P}_i$ is a trivial brace over an elementary abelian $p_i$-group for all $i=2,\dots ,n$.
    \item[(iv)] There exists a permutation $\sigma\in\Sym_{\{ 2,\dots ,n\}}$ such that
    $$\overline{P}_{\sigma(2)}, \, \overline{P}_{\sigma(2)}\overline{P}_{\sigma(3)}, \dots , \overline{P}_{\sigma(2)}\overline{P}_{\sigma(3)}\cdots \overline{P}_{\sigma(n)}$$
    are ideals of the left brace $\mathcal{G}(\mathcal{S},s)$, $\overline{P}_{\sigma(2)}\subseteq\soc(\mathcal{G}(\mathcal{S},s))$ and $$(\overline{P}_{\sigma(2)}\cdots \overline{P}_{\sigma(i)})/(\overline{P}_{\sigma(2)}\cdots \overline{P}_{\sigma(i-1)})\subseteq \soc(\mathcal{G}(\mathcal{S},s)/(\overline{P}_{\sigma(2)}\cdots \overline{P}_{\sigma(i-1)}))$$
    for all $2<i\leq n$.
    \item[(v)] Let $\overline{K}_i=\{ g\in \mathcal{G}(\mathcal{S},s)\mid g(\overline{P}_{\sigma(2)}\cdots \overline{P}_{\sigma(i)}(A))=\overline{P}_{\sigma(2)}\cdots \overline{P}_{\sigma(i)}(A) \mbox{ for all }A\in \mathcal{S} \}$. Then $\overline{K}_i=\overline{P}_{\sigma(2)}\cdots \overline{P}_{\sigma(i)}$ for all $2\leq i\leq n$.
\end{itemize}
Hence $P=\ker(\overline{\psi})$ is a normal Sylow $p_1$-subgroup
of $\mathcal{G}(X,r)$, and thus $P=P_1$ is the Sylow
$p_1$-subgroup of the additive group of the left brace
$\mathcal{G}(X,r)$ and it is an ideal of this left brace.
Furthermore, $P_1\subseteq \soc(\mathcal{G}(X,r))$ and
$K_1=K=P=P_1$.

 Thus, we have proved that every Sylow $p$-subgroup of $\soc(\mathcal{G}(X,r))$
 also is a Sylow $p$-subgroup of $\mathcal{G}(X,r)$.
 Since the multiplicative group of $\soc(\mathcal{G}(X,r))$ is abelian, we have that
 $\soc(\mathcal{G}(X,r))$ is a Hall subgroup of $\mathcal{G}(X,r)$.
  Note that $P=P_1$ is a trivial brace over an elementary abelian $p_1$-group.
  Since $\mathcal{G}(\mathcal{S},s)\cong \mathcal{G}(X,r)/P_1$ as left braces,
  we have that $\overline{P}_i\cong P_i$ for all $i=2,\dots, n$ and
 $$P_1, \, P_1P_{\sigma(2)}, \dots , P_1P_{\sigma(2)}\cdots P_{\sigma(n)}$$
 are ideals of the left brace $\mathcal{G}(X,r)=P_1P_2\cdots P_n$. Furthermore
 $$(P_1P_{\sigma(2)}\cdots P_{\sigma(i)})/(P_1P_{\sigma(2)}\cdots P_{\sigma(i-1)})\subseteq \soc(\mathcal{G}(X,r)/(P_1P_{\sigma(2)}\cdots P_{\sigma(i-1)}))$$
 for all $1<i\leq n$. Let $i\geq 2$ and let $K_i=\{ g\in\mathcal{G}(X,r)\mid g(P_1P_{\sigma(2)}\cdots  P_{\sigma(i)}(x))=P_1P_{\sigma(2)}\cdots  P_{\sigma(i)}(x)\mbox{ for all }x\in X\}$.
 Since $P_1P_{\sigma(2)}\cdots  P_{\sigma(i)}(x)=P_{\sigma(2)}\cdots  P_{\sigma(i)}(P_1(x))$
 and $\mathcal{S}=\{ P_1(x)\mid x\in X\}$, we have that
 $$\overline{\psi}(K_i)=\{ g\in\mathcal{G}(\mathcal{S},s)\mid g(\overline{P}_{\sigma(2)}\cdots  \overline{P}_{\sigma(i)}(A))=\overline{P}_{\sigma(2)}\cdots  \overline{P}_{\sigma(i)}(A)\mbox{ for all }A\in \mathcal{S}\}=\overline{K}_i.$$
 Hence
 $$K_i=\overline{\psi}^{-1}(\overline{K}_i)=K_1P_{\sigma(2)}\cdots P_{\sigma(i)}=P_1P_{\sigma(2)}\cdots P_{\sigma(i)}$$
 for all $2\leq i\leq n$.
 Therefore the  result follows by induction.
\end{proof}

 The following result is an easy consequence of Theorem \ref{multi}.

\begin{corollary}\label{cmulti}
    Let $n$ be a positive integer. Let $p_1,\dots ,p_n$ be $n$ distinct prime numbers.
    Let $(X,r)$ be an indecomposable multipermutation solution of the YBE of
    cardinality $p_1\cdots p_n$. Then $(X,r)$ and the  solution associated to
    the left brace $\mathcal{G}(X,r)$ have multipermutation level $\leq n$.
\end{corollary}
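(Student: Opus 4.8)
The plan is to treat the two asserted solutions separately, exploiting the chain of ideals supplied by Theorem~\ref{multi}(iv)--(v), and to run an induction on $n$ for $(X,r)$ itself. Write $B=\mathcal{G}(X,r)$ and $Q_i=P_{\sigma(1)}\cdots P_{\sigma(i)}$, so that $Q_0=\{\id\}\subseteq Q_1\subseteq\cdots\subseteq Q_n=B$ is a chain of ideals of $B$ with $Q_1\subseteq\soc(B)$ and $Q_i/Q_{i-1}\subseteq\soc(B/Q_{i-1})$ for $1<i\leq n$. For the brace solution $(B,r_B)$ I would use the socle series $\soc_0(B)=\{\id\}$, $\soc_k(B)/\soc_{k-1}(B)=\soc(B/\soc_{k-1}(B))$. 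First I would record the elementary fact that a surjective homomorphism of left braces $f\colon A\to C$ carries socle into socle: if $\lambda^A_a=\id$ then $f(a)\circ f(b)=f(a+b)=f(a)+f(b)$ for all $b$, so $f(a)\in\soc(C)$ by surjectivity. Applying this to the canonical surjection $B/Q_{i-1}\to B/\soc_{i-1}(B)$ (legitimate once $Q_{i-1}\subseteq\soc_{i-1}(B)$), together with Theorem~\ref{multi}(iv), an immediate induction yields $Q_i\subseteq\soc_i(B)$ for all $i$; in particular $\soc_n(B)=B$. Since $\sigma_a=\lambda_a$ in $(B,r_B)$, two elements are retract-equivalent iff they lie in the same $\circ$-coset of $\soc(B)=\ker(\lambda)$, so $\Ret(B,r_B)\cong(B/\soc(B),r_{B/\soc(B)})$; iterating gives $\Ret^{k}(B,r_B)\cong(B/\soc_k(B),\cdot)$, and $\soc_n(B)=B$ forces $\Ret^{n}(B,r_B)$ to be a point. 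Hence the brace solution has multipermutation level at most $n$.

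For $(X,r)$ I would argue by induction on $n$, the case $n=1$ (prime cardinality, level $1$) following from \cite[Theorem~2.13]{ESS}. For $n>1$, after renumbering the primes we may assume $\sigma(1)=1$, so $P_1\subseteq\soc(B)$, and I reuse the epimorphism $\psi\colon(X,r)\to(\mathcal{S},s)$ constructed in the proof of Theorem~\ref{multi}, where $\mathcal{S}=\{P_1(x)\mid x\in X\}$, $|\mathcal{S}|=p_2\cdots p_n$, and $(\mathcal{S},s)$ is again indecomposable (Lemma~\ref{CCP}) and multipermutation (\cite[Lemma~4]{CJOComm}); by the induction hypothesis $(\mathcal{S},s)$ has level at most $n-1$. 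The decisive observation is that each fibre $P_1(x)$ lies inside a single retract class of $(X,r)$: since $P_1\subseteq\soc(B)$, for $t\in P_1$ we have $\lambda_t=\id$, whence $\sigma_{t(x)}=\lambda_t(\sigma_x)=\sigma_x$ by~(\ref{Lemma 2.1}), i.e. $t(x)\sim x$. Consequently the retract partition of $X$ is coarser than the $\psi$-fibres, so the retract map factors as $X\xrightarrow{\psi}\mathcal{S}\xrightarrow{g}\overline{X}$; since $\psi$ and the retract map are both solution homomorphisms and $\psi$ is onto, a direct check shows that $g$ is a surjective homomorphism of solutions, so that $\Ret(X,r)$ is an epimorphic image of $(\mathcal{S},s)$.

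To finish I would invoke that retraction is functorial with respect to epimorphisms: an epimorphism $f\colon(A,a)\to(C,c)$ of solutions induces a surjection $\overline f\colon\Ret(A,a)\to\Ret(C,c)$, well defined because $\sigma_x=\sigma_{x'}$ together with surjectivity of $f$ forces $\sigma_{f(x)}=\sigma_{f(x')}$. Hence ``level at most $k$'' passes to epimorphic images, and applying this to $g$ gives that $\Ret(X,r)$ has level at most $n-1$, so $(X,r)$ has level at most $n$, closing the induction. I expect the main obstacle to be bookkeeping rather than any single computation: one must verify carefully that the fibre partition refines the retract partition and that the induced factorisation $g$ is genuinely a morphism of solutions, and separately that socle is preserved under brace epimorphisms so the socle-series induction closes. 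Both reduce to unwinding the definitions together with~(\ref{Lemma 2.1}) and Theorem~\ref{multi}(iv), so no serious difficulty is anticipated beyond this organisation.
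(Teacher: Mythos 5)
The paper offers no proof of this corollary, asserting only that it is an easy consequence of Theorem~\ref{multi}, and your argument is a correct and complete derivation along exactly the intended lines. Both halves check out: for the brace solution, the induction $P_{\sigma(1)}\cdots P_{\sigma(i)}\subseteq\soc_i(\mathcal{G}(X,r))$ (using that surjective brace homomorphisms carry socle into socle, and that $\Ret$ of the solution associated to a brace $B$ is the solution associated to $B/\soc(B)$) gives $\soc_n(\mathcal{G}(X,r))=\mathcal{G}(X,r)$; and for $(X,r)$, the induction on $n$ via the quotient solution $(\mathcal{S},s)$ of cardinality $p_2\cdots p_n$, combined with the observation that each fibre $P_{\sigma(1)}(x)$ lies in a single retract class (so $\Ret(X,r)$ is an epimorphic image of $(\mathcal{S},s)$ and level passes to epimorphic images), closes the argument.
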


    In order to prove our main result we need some preparatory
lemmas.

\begin{lemma}\label{lambdasum}
    Let $(X,r)$ be a solution of the YBE. Then in $G(X,r)$ we have that
    $$\lambda_{x_1+\dots +x_n}(y)=(\sigma_{x_1}+\dots +\sigma_{x_n})(y)$$
    for all $x_1,\dots ,x_n,y\in X$.
\end{lemma}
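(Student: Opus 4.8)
The plan is to reduce the statement to the single fact that the $\lambda$-action of the structure group $G(X,r)$ on its free abelian basis $X$ coincides, through the canonical brace epimorphism $\phi\colon G(X,r)\longrightarrow\mathcal{G}(X,r)$, with the defining permutation action of $\mathcal{G}(X,r)$ on $X$. Concretely, first I would prove the auxiliary claim that $\lambda_a(y)=\phi(a)(y)$ for every $a\in G(X,r)$ and every basis element $y\in X$, where on the left $\lambda_a(y)$ is computed in the additive group of $G(X,r)$ and on the right $\phi(a)\in\mathcal{G}(X,r)\subseteq\Sym_X$ is applied to the point $y$; the two sides are then identified via the bijection between $X$ and the basis of the additive group. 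Granting this claim the lemma is immediate: since $\phi$ is a homomorphism of left braces it is additive, so $\phi(x_1+\dots+x_n)=\phi(x_1)+\dots+\phi(x_n)=\sigma_{x_1}+\dots+\sigma_{x_n}$ in $\mathcal{G}(X,r)$, and applying the claim with $a=x_1+\dots+x_n$ yields $\lambda_{x_1+\dots+x_n}(y)=\phi(x_1+\dots+x_n)(y)=(\sigma_{x_1}+\dots+\sigma_{x_n})(y)$.

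For the auxiliary claim I would argue as follows. Each $\lambda_x$ (for $x\in X$) acts on the additive group $\Z^{(X)}$ of $G(X,r)$ by permuting the basis according to $\sigma_x$, so it lies in the copy of $\Sym_X$ sitting inside $\aut(\Z^{(X)},+)$ consisting of the basis-permuting automorphisms. Because $X$ generates the multiplicative group of $G(X,r)$ and $\lambda$ is a group homomorphism $(G(X,r),\circ)\longrightarrow\aut(\Z^{(X)},+)$, every $\lambda_a$ again permutes the basis; hence $a\mapsto \lambda_a|_X$ is a well-defined group homomorphism $G(X,r)\longrightarrow\Sym_X$. The map $a\mapsto\phi(a)$ is likewise a group homomorphism $G(X,r)\longrightarrow\mathcal{G}(X,r)\subseteq\Sym_X$. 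These two homomorphisms agree on the generating set $X$, since $\lambda_x|_X=\sigma_x=\phi(x)$, and therefore they coincide on all of $G(X,r)$, which is exactly the claim.

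The only genuinely delicate point is the bookkeeping between the two ambient objects: the value $\lambda_{x_1+\dots+x_n}(y)$ is a priori an element of the free abelian additive group of $G(X,r)$, whereas $(\sigma_{x_1}+\dots+\sigma_{x_n})(y)$ is a point of $X$, and the assertion is that the former is the basis element corresponding to the latter. This is precisely what the auxiliary claim pins down, so once it is verified there is nothing left to compute; the additivity of $\phi$ then finishes the argument in one line. As a fallback not relying on $\phi$, I note the statement can also be proved directly by induction on $n$: from $a\circ c=a+\lambda_a(c)$ one writes $x_1+\dots+x_n=(x_1+\dots+x_{n-1})\circ\lambda_{x_1+\dots+x_{n-1}}^{-1}(x_n)$, and then uses $\lambda_{a+b}=\lambda_a\lambda_{\lambda_a^{-1}(b)}$ together with (\ref{Lemma 2.1}) to match the resulting nested composition of $\sigma$'s with the additive sum $\sigma_{x_1}+\dots+\sigma_{x_n}$. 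I expect the homomorphism argument above to be the shorter and cleaner of the two.
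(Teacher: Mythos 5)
Your argument is correct, and your primary route is genuinely different from the paper's. The paper proves the lemma by induction on $n$: it writes $x_1+\dots+x_n=(x_1+\dots+x_{n-1})\lambda^{-1}_{x_1+\dots+x_{n-1}}(x_n)$, applies the multiplicativity of $\lambda$, the inductive hypothesis, and the identity $\lambda_g(\sigma_y)=\sigma_{g(y)}$ from (\ref{Lemma 2.1}) to reassemble the nested composition into the additive sum $\sigma_{x_1}+\dots+\sigma_{x_n}$ --- this is exactly the ``fallback'' induction you sketch at the end. Your main argument instead isolates the cleaner structural fact that $a\mapsto\lambda_a|_X$ and $a\mapsto\phi(a)$ are two group homomorphisms $(G(X,r),\circ)\to\Sym_X$ agreeing on the generating set $X$ (using that each $\lambda_a$ permutes the free basis, since the $\lambda_x$ do and $X$ generates multiplicatively), and then invokes the additivity of the brace epimorphism $\phi$ to get $\phi(x_1+\dots+x_n)=\sigma_{x_1}+\dots+\sigma_{x_n}$ in the additive group of $\mathcal{G}(X,r)$. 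Both ingredients are legitimately available from the preliminaries (the brace structure on $\mathcal{G}(X,r)$ is defined precisely so that $\phi$ is a brace homomorphism), so there is no circularity, and you correctly flag the only delicate point, namely the identification of $X$ with the additive basis. What your approach buys is a computation-free, one-line deduction that also yields the stronger statement $\lambda_a|_X=\phi(a)$ for arbitrary $a\in G(X,r)$; what the paper's induction buys is self-containedness within the brace identities it has already recorded, in a form that is reused verbatim later in the proof of Theorem \ref{main} (equation (\ref{ksigma2})).
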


\begin{proof}
    We shall prove the result by induction on $n$. For $n=1$, clearly we have that
    $\lambda_{x_1}(y)=\sigma_{x_1}(y)$ by the definition of the lambda map in $G(X,r)$.
    Suppose that $n>1$ and that the result holds for $n-1$.
    Using the inductive hypothesis, the relation $a+b = a \lambda^{-1}_{a}(b)$ for $a,b\in G(X,r)$ and (\ref{Lemma 2.1}), we have
    \begin{align*}
        \lambda_{x_1+\dots +x_n}(y)&=\lambda_{(x_1+\dots +x_{n-1})\lambda_{x_1+\dots +x_{n-1}}^{-1}(x_n)}(y)\\
         &=\lambda_{x_1+\dots +x_{n-1}}\lambda_{\lambda_{x_1+\dots +x_{n-1}}^{-1}(x_n)}(y)\\
         &=(\sigma_{x_1}+\dots +\sigma_{x_{n-1}})\sigma_{(\sigma_{x_1}+\dots +\sigma_{x_{n-1}})^{-1}(x_n)}(y)\\
        &=((\sigma_{x_1}+\dots +\sigma_{x_{n-1}})\lambda_{(\sigma_{x_1}+\dots +\sigma_{x_{n-1}})^{-1}}(\sigma_{x_n}))(y)\\
         &=(\sigma_{x_1}+\dots +\sigma_{x_{n}})(y)
\end{align*}
for all $x_1,\dots ,x_n,y\in X$. Therefore the result follows by induction.
\end{proof}

 An important and quite
nontrivial step in our approach is based on a comparison of the
chain of normal subgroups coming from
Theorem~\ref{squarefreedegree} and the chain of ideals resulting
from Theorem~\ref{multi}. This will be crucial in the inductive
procedure used in the proof of the main theorem,
Theorem~\ref{main}.

\begin{lemma}\label{multi2}
    Let $n$ be a positive integer. Let $p_1,\dots ,p_n$ be $n$ distinct prime numbers. Assume that $(X,r)$ is an indecomposable multipermutation solution
    of the YBE of cardinality $p_1\cdots p_n$. Let $P_i$ be the Sylow $p_i$-subgroup of the additive group of the left brace $\mathcal{G}(X,r)$, for $i=1,\dots ,n$. Let
    $$\{ 1\}=K_0\subseteq T_1\subseteq K_1\subseteq T_2\subseteq K_2\subseteq \dots\subseteq T_n\subseteq K_n=\mathcal{G}(X,r)$$
    be normal subgroups of $\mathcal{G}(X,r)$ such that
    \begin{itemize}
        \item[(i)] $p_i$ is a divisor of $|K_i/K_{i-1}|$ and $T_i/K_{i-1}$ is the Sylow $p_i$-subgroup of $K_i/K_{i-1}$.
        \item[(ii)] $K_i=\{ g\in\mathcal{G}(X,r)\mid g(T_i(x))=T_i(x)$ for all $x\in X\}$, where $T_i(x)=\{ t(x)\mid t\in T_i\}$.
    \end{itemize}
Then
$$K_i=P_1\cdots P_i$$
is an ideal of the left brace $\mathcal{G}(X,r)$ and
$$\lambda_g(h)=h$$
for all $g\in K_i$, $h\in P_i\cdots P_n$ and all $1\leq i\leq n$.
\end{lemma}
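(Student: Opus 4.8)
The plan is to prove the statement by induction on $n$, exploiting the two chains of subgroups simultaneously: the group-theoretic chain $\{1\}=K_0\subseteq T_1\subseteq K_1\subseteq\dots\subseteq K_n=\mathcal{G}(X,r)$ coming from Theorem~\ref{squarefreedegree}, and the ideal chain $P_{\sigma(1)}\subseteq P_{\sigma(1)}P_{\sigma(2)}\subseteq\dots$ produced by Theorem~\ref{multi}. The key identification to make is that these two chains coincide after reindexing. First I would note that by Theorem~\ref{multi}(v), setting $L_i=P_{\sigma(1)}\cdots P_{\sigma(i)}$, the subgroup $L_i$ equals $\{g\in\mathcal{G}(X,r)\mid g(L_i(x))=L_i(x)\text{ for all }x\in X\}$. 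The defining property (ii) of $K_i$ in the present lemma has exactly the same shape, with $T_i$ in place of $L_i$. Since both $T_i$ and $L_i$ are normal subgroups whose orbits on $X$ form a system of imprimitivity, and since in both cases the stabilizer of the block system equals the subgroup itself, I would argue that the two chains are forced to agree: $T_i(x)=K_i(x)$ for all $x$ (the inclusion $T_i\subseteq K_i$ with $K_i/T_i$ acting trivially on the relevant blocks, combined with condition (i) that $T_i/K_{i-1}$ is the Sylow $p_i$-subgroup, pins down the sizes of the orbits). The upshot is that the block systems $\{K_i(x)\}$ and $\{L_i(x)\}$ have the same cardinality and refine each other compatibly, whence $K_i=L_i=P_{\sigma(1)}\cdots P_{\sigma(i)}$ up to matching the permutation $\sigma$; because the primes are distinct and $p_i\mid|K_i/K_{i-1}|$ by (i), the permutation is in fact forced to be the identity, giving $K_i=P_1\cdots P_i$.

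**The ideal property.** Once $K_i=P_1\cdots P_i$ is established, the fact that $K_i$ is an ideal of $\mathcal{G}(X,r)$ is immediate from Theorem~\ref{multi}(iv), which asserts precisely that each $P_{\sigma(1)}\cdots P_{\sigma(i)}$ is an ideal; under the identification $\sigma=\id$ these are the $K_i$. So this portion requires only the bookkeeping of the previous paragraph and no new argument.

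**The $\lambda$-fixing statement.** For the assertion that $\lambda_g(h)=h$ for all $g\in K_i$ and $h\in P_i\cdots P_n$, I would proceed by induction on $i$. The base case draws on the reasoning inside the proof of Theorem~\ref{multi}: there it is shown via equation~(\ref{fixed}) that every element of $K_1=P_1$ lies in $\soc(\mathcal{G}(X,r))$, and membership in the socle means exactly $\lambda_t=\id$; combined with Theorem~\ref{multi}(iv), which gives $(P_{\sigma(1)}\cdots P_{\sigma(i)})/(P_{\sigma(1)}\cdots P_{\sigma(i-1)})\subseteq\soc(\mathcal{G}(X,r)/(P_{\sigma(1)}\cdots P_{\sigma(i-1)}))$, I can transfer this socle-containment statement into the required $\lambda$-triviality. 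Concretely, to show $\lambda_g(h)=h$ for $g\in K_i=P_1\cdots P_i$ and $h\in P_i\cdots P_n$, I would split $g$ using the ideal decomposition and use that $\lambda$ is multiplicative: $\lambda_{g_1g_2}=\lambda_{g_1}\lambda_{g_2}$. The socle condition ensures that each layer $P_j$ (for $j\le i$) acts trivially via $\lambda$ on the complementary factors $P_i\cdots P_n$ modulo the lower ideals, and the induction lets me lift this from the quotient to $\mathcal{G}(X,r)$ itself.

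**Main obstacle.** The hard part will be the first step, namely proving rigorously that the abstractly-defined chain $T_i\subseteq K_i$ from Theorem~\ref{squarefreedegree} coincides with the intrinsically brace-theoretic chain $P_{\sigma(1)}\cdots P_{\sigma(i)}$ from Theorem~\ref{multi}, and in particular that the two permutations $\sigma$ can be taken equal. The subtlety is that Theorem~\ref{squarefreedegree} produces \emph{some} ordering of the primes from purely permutation-group data, while Theorem~\ref{multi} produces \emph{some} ordering from brace data, and a priori there is no reason these orderings match; one must use the uniqueness built into the stabilizer descriptions (condition (ii) in both statements, together with Remark~\ref{KTP}) to show that each block system is canonically determined, hence the orderings agree. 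The $\lambda$-fixing conclusion, by contrast, should follow relatively smoothly once the identification of chains is in hand, since it is essentially a repackaging of the socle-filtration already furnished by Theorem~\ref{multi}(iv).
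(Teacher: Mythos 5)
There is a genuine gap at exactly the point you flag as the ``main obstacle,'' and the resolution you sketch would not work. You propose to identify the given chain $K_i$ with the chain $L_i=P_{\sigma(1)}\cdots P_{\sigma(i)}$ of Theorem~\ref{multi}(v) on the grounds that both are described as stabilizers of block systems and that ``each block system is canonically determined.'' But block systems of a transitive group are not unique, and the two chains come with a priori unrelated orderings of the primes: the $T_1$-orbits have size $p_1$, while the $P_{\sigma(1)}$-orbits have size $p_{\sigma(1)}$, and nothing formal forces $p_1$ to divide $|\soc(\mathcal{G}(X,r))|$ (which is what selects the first prime in Theorem~\ref{multi}). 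Asserting that ``the permutation is in fact forced to be the identity'' is assuming precisely the content of the lemma. The missing idea is a concrete computation: for $t\in T_1$ and $g\in P_j$ with $j>1$, write $tg$ in two ways, as $\lambda_t(g)\,\lambda^{-1}_{\lambda_t(g)}(t)$ and as $g\,(g^{-1}tg)$; since $\lambda_t(g),g\in P_j$, $\lambda^{-1}_{\lambda_t(g)}(t)\in P_1$, and $g^{-1}tg\in T_1\subseteq P_1$ (normality of $T_1$), uniqueness of factorization in $P_jP_1$ gives $\lambda_t(g)=g$, and triviality of the brace $P_1$ then yields $\lambda_t=\id$, i.e.\ $T_1\subseteq\soc(\mathcal{G}(X,r))$. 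Only now does the Hall property of the socle (Theorem~\ref{multi}(i)) force $P_1\subseteq\soc(\mathcal{G}(X,r))$, hence $P_1$ is an ideal and $K_1=P_1$; the general case then follows by passing to the induced solution on the $K_1$-blocks and applying the inductive hypothesis to the image chain, pulling the conclusion back via the observation that the correction term lies in $K_1\cap(P_i\cdots P_n)=P_1\cap(P_i\cdots P_n)=\{1\}$.

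Your treatment of the $\lambda$-fixing statement has a related, secondary defect: you want to quote the socle filtration of Theorem~\ref{multi}(iv), but that filtration is stated for the particular $\sigma$ produced there, not for the chain $P_1\cdots P_i$ of the lemma, so it cannot be invoked until the identification above is already established for the lemma's own ordering. The paper instead obtains $\lambda_g(h)=h$ directly within the same induction (base case from the $T_1$ computation, inductive step from the quotient solution plus the trivial-intersection argument), which avoids this circularity.
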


\begin{proof}
    We shall prove the result by induction on $n$. For $n=1$, $|X|=p_1$ and, by \cite[Theorem 2.13]{ESS}, $\mathcal{G}(X,r)=K_1=P_1$ is the trivial brace of order $p_1$.
    Hence the result holds for $n=1$. Suppose that $n>1$ and that the result holds for every indecomposable multipermutation solution of cardinality $q_1\cdots q_m$,
    where $m<n$ and $q_1,\dots ,q_m$ are $m$ distinct prime numbers.

    By Theorem \ref{multi}, $P_i$ is a trivial brace, for every $i=1,\dots ,n$, and $\mathcal{G}(X,r)=P_1\cdots P_n$. Let $t\in T_1$ and $g\in P_j$ for $j>1$. We have that
    $$\lambda_t(g)\lambda^{-1}_{\lambda_t(g)}(t)=tg=gg^{-1}tg.$$
    Since $\lambda_t(g),g\in P_j$, $\lambda^{-1}_{\lambda_t(g)}(t)\in P_1$ and
    $g^{-1}tg\in T_1\subseteq P_1$, it follows that
    $\lambda_t(g)=g$. Since $P_1$ is a trivial brace,
    $$\lambda_t(h)=h$$
    for all $h\in \mathcal{G}(X,r)=P_1+\dots +P_n$.
    Thus $T_1\subseteq \soc(\mathcal{G}(X,r))$.
    By Theorem \ref{multi}, $\soc(\mathcal{G}(X,r))$ is a hall $\pi$-subgroup of the
    additive group of the left brace $\mathcal{G}(X,r)$. In particular, $P_1\subseteq \soc(\mathcal{G}(X,r))$. Hence $P_1$ is an ideal of $\mathcal{G}(X,r)$.
    As in the proof of Theorem \ref{multi}, we see that $K_1=P_1$.

    Let $\mathcal{S}=\{ K_1(x)\mid x\in X\}$. Since $K_1$ is a normal subgroup of $\mathcal{G}(X,r)$, $\mathcal{S}$ is a system of imprimitivity of $\mathcal{G}(X,r)$.
    Clearly $|K_1(x)|=|P_1(x)|=p_1$ for all $x\in X$. Hence $|\mathcal{S}|=p_2\cdots p_n$. Let $\psi\colon \mathcal{G}(X,r)\longrightarrow\Sym_{\mathcal{S}}$
    be the map defined by $\psi(g)(K_1(x))=K_1(g(x))$ for all $g\in\mathcal{G}(X,r)$ and all $x\in X$. Note that $\psi$ is a homomorphism of groups and $\ker(\psi)=K_1=P_1$.
    Let $s\colon \mathcal{S}\times\mathcal{S}\longrightarrow \mathcal{S}\times \mathcal{S}$ be the map defined by
    $$s(K_1(x),K_1(y))=(\sigma_{K_1(x)}(K_1(y)),\sigma^{-1}_{\sigma_{K_1(x)}(K_1(y))}(K_1(x)))$$
    for all $x,y\in X$, where $\sigma_{K_1(x)}=\psi(\sigma_x)$ for all $x\in X$.  Let $g\in K_1$. By (\ref{Lemma 2.1}) and in
    view of the fact that $g\in \soc(\mathcal{G}(X,r))$, we have
    $$\psi(\sigma_{g(x)})=\psi(\lambda_g(\sigma_x))=\psi(\sigma_x).$$
    Hence $s$ is well defined. We shall prove that $(\mathcal{S},s)$ is an indecomposable
    multipermutation solution of the YBE.
    Let $x,y\in X$.  By the definition of $\psi$ and of the permutations $\sigma_{K_{1}(x)}$, we have that
    \begin{align*}
        \sigma_{K_1(x)}\sigma_{\sigma^{-1}_{K_1(x)}(K_1(y))}&=\psi(\sigma_x)\sigma_{\psi(\sigma^{-1}_x)(K_1(y))}\\
            &=\psi(\sigma_x)\sigma_{K_1(\sigma^{-1}_x(y))}\\
            &=\psi(\sigma_x)\psi(\sigma_{\sigma^{-1}_x(y)})\\
            &=\psi(\sigma_x\sigma_{\sigma^{-1}_x(y)})
    \end{align*}
Since $\sigma_x\sigma_{\sigma^{-1}_x(y)}=\sigma_y\sigma_{\sigma^{-1}_y(x)}$, we get that
$$\sigma_{K_1(x)}\sigma_{\sigma^{-1}_{K_1(x)}(K_1(y))}=\sigma_{K_1(y)}\sigma_{\sigma^{-1}_{K_1(y)}(K_1(x))}.$$
Hence $(\mathcal{S},s)$ is a solution of the YBE. Let $f\colon (X,r)\longrightarrow (\mathcal{S},s)$ be the map defined by $f(x)=K_1(x)$ for all $x\in X$. We have that
\begin{align*}
    f(\sigma_x(y))&=K_1(\sigma_x(y))=\psi(\sigma_x)(K_1(y))\\
    &=\sigma_{K_1(x)}(K_1(y))=\sigma_{f(x)}(f(y))
\end{align*}
for all $x,y\in X$. Hence $f$ is an epimorphism of solutions. By Lemma \ref{CCP}, $(\mathcal{S},s)$ is indecomposable.
By \cite[Lemma 4]{CJOComm}, $(\mathcal{S},s)$ is a multipermutation solution. We know that $f$ induces an epimorphism of left braces
$\overline{f}\colon \mathcal{G}(X,r)\longrightarrow\mathcal{G}(\mathcal{S},s)$ such that $\overline{f}(\sigma_x)=\sigma_{K_1(x)}=\psi(\sigma_x)$ for all $x\in X$.
Hence $$\mathcal{G}(\mathcal{S},s)=\psi(\mathcal{G}(X,r))\cong\mathcal{G}(X,r)/K_1\cong P_2\cdots P_n$$
as left braces. Note that
$$\{ 1\}=\psi(K_1)\subseteq \psi(T_2)\subseteq \psi(K_2)\subseteq \dots\subseteq \psi(T_n)\subseteq \psi(K_n)=\mathcal{G}(\mathcal{S},s)$$
are normal subgroups of $\mathcal{G}(\mathcal{S},s)$. Since $\psi(K_i)/\psi(K_{i-1})\cong (K_i/K_1)/(K_{i-1}/K_1)\cong K_i/K_{i-1}$ and $\psi(T_i)/\psi(K_{i-1})\cong (T_i/K_1)/(K_{i-1}/K_1)\cong T_i/K_{i-1}$, we have that
$p_i$ is a divisor of $|\psi(K_i)/\psi(K_{i-1})|$ and $\psi(T_i)/\psi(K_{i-1})$ is the Sylow $p_i$-subgroup of $\psi(K_i)/\psi(K_{i-1})$ for all $i>1$.
Since $K_i=\{ g\in\mathcal{G}(X,r)\mid g(T_i(x))=T_i(x)$ for all $x\in X\}$, we have that
$\psi(K_i)=\{g\in \mathcal{G}(\mathcal{S},s)\mid g(\psi(T_i)(K_1(x)))=\psi(T_i)(K_1(x))$ for all $x\in X \}$ for all $i>1$. Hence, by the inductive hypothesis,
$\psi(K_i)=\psi(P_2)\cdots \psi(P_i)$
is an ideal of $\mathcal{G}(\mathcal{S},s)$ and
$$\lambda_g(h)=h$$
holds in $\mathcal{G}(S,s)$, for all $g\in \psi(K_i)$, $h\in
\psi(P_i)\cdots \psi(P_n)$ and $i>1$. Therefore $K_i=P_1P_2\cdots
P_i$ is a normal subgroup, and hence an ideal of
$\mathcal{G}(X,r)$ and for all $g\in K_i$, $h\in P_i\cdots P_n$
and $i>1$ there exists $k\in K_1$ such that
$$\lambda_g(h)=hk$$
holds in $\mathcal{G}(X,r)$. Since $k\in K_1\cap(P_i\cdots
P_n)=P_1\cap(P_i\cdots P_n)=\{ 1\}$, we have that
$\lambda_g(h)=h$. Therefore the result follows by induction.
\end{proof}

We are now ready for the main result of this paper.

\begin{theorem}  \label{main}
    Let $n$ be a positive integer. Let $p_1,\dots ,p_n$ be $n$ distinct prime numbers.
    Let $(X,r)$ be an indecomposable solution of the YBE of cardinality $p_1\cdots p_n$.
    Then $(X,r)$ is a multipermutation solution.  In particular, $(X,r)$ is not a simple
    solution if $n>1$.
\end{theorem}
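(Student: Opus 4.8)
The plan is to argue by strong induction on $n$, the number of distinct prime factors of $|X|$. For $n=1$ the cardinality is prime and the statement is \cite[Theorem~2.13]{ESS}. For the inductive step, I claim it is enough to prove that $(X,r)$ is \emph{retractable}. Indeed, once retractability is known, the natural map $X\to\overline X$ is an epimorphism of solutions onto $\Ret(X,r)$, so by Lemma~\ref{CCP} its fibres form a system of imprimitivity of a common size exceeding $1$; hence $|\overline X|$ is a proper divisor of $p_1\cdots p_n$, that is, a product of fewer than $n$ of the primes $p_i$. By Lemma~\ref{CCP} again $\Ret(X,r)$ is indecomposable, so either $|\overline X|=1$, in which case $(X,r)$ has multipermutation level $1$, or the inductive hypothesis applies to $\Ret(X,r)$ and shows it to be a multipermutation solution; in both cases $(X,r)$ itself is a multipermutation solution. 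Thus the entire argument is reduced to establishing retractability when $n>1$.

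Next I would reformulate retractability as a purely brace-theoretic condition, namely $\soc(\mathcal G(X,r))\neq\{1\}$. This follows from (\ref{Lemma 2.1}): the group $\mathcal G(X,r)$ acts faithfully on $X$, so any $g\in\soc(\mathcal G(X,r))\setminus\{1\}$ moves some point $y$, and then $\sigma_{g(y)}=\lambda_g(\sigma_y)=\sigma_y$ because $\lambda_g=\id$, exhibiting two distinct points of $X$ with the same associated permutation, i.e.\ retractability. Hence the theorem comes down to proving that the socle of the brace $\mathcal G(X,r)$ is nonzero whenever $n>1$.

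To produce a nonzero socle I would combine the permutation-group description with the brace structure. Since $\mathcal G(X,r)$ is solvable and transitive of degree $p_1\cdots p_n$, Theorem~\ref{squarefreedegree} supplies the chain $\{1\}=K_0\subseteq T_1\subseteq K_1\subseteq\cdots\subseteq K_n=\mathcal G(X,r)$ together with a normal, elementary abelian (hence abelian) subgroup $T_1$ of prime-power order. On the brace side, each Sylow subgroup $P_i$ of the additive group is a left ideal, so it is $\lambda$-invariant and is at the same time a Sylow subgroup of the multiplicative group, and $T_1$ is contained in the additive Sylow subgroup attached to its prime. The aim is to show that one such Sylow left ideal in fact lies in $\soc(\mathcal G(X,r))$, equivalently that $\mathcal G(X,r)$ has an abelian normal Sylow subgroup; Lemma~\ref{key} would then force retractability directly. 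The device for this is the comparison between the group chain of Theorem~\ref{squarefreedegree} and the ideal chain $P_{\sigma(1)},\,P_{\sigma(1)}P_{\sigma(2)},\dots$ of Theorem~\ref{multi}, carried out in Lemma~\ref{multi2}: applying it to the square-free quotient solution — which the inductive hypothesis already guarantees to be a multipermutation solution — identifies the $K_i$ with the products $P_{\sigma(1)}\cdots P_{\sigma(i)}$ and yields $\lambda_g(h)=h$ for $g\in K_i$ and $h$ ranging over the complementary product of Sylow ideals, thereby pinning the relevant contribution inside the socle.

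The main obstacle is exactly this bridging step. The normal elementary abelian subgroup $T_1$ provided by Theorem~\ref{squarefreedegree} need \emph{not} be a full Sylow subgroup of $\mathcal G(X,r)$: the factors $\Z/(p_i)\rtimes\Aut(\Z/(p_i))$ occurring there contribute extra prime divisors, coming from $\Aut(\Z/(p_i))\cong\Z/(p_i-1)$, which may enlarge the Sylow subgroup beyond $T_1$, so Lemma~\ref{key} cannot be invoked verbatim. The essential work is therefore to show that the brace axioms, through the alignment of the two chains in Lemma~\ref{multi2} and the inductive multipermutation structure of the quotient, force these extra contributions to act trivially under $\lambda$, producing a genuine abelian normal Sylow subgroup and hence a nonzero socle. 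Once retractability is in hand the induction closes, and the final clause is immediate: for $n>1$ the cardinality $p_1\cdots p_n$ is not prime, and $(X,r)$, being a multipermutation solution on more than one point, is retractable, so it cannot be simple by Lemma~\ref{simple indec}.
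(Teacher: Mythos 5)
Your reduction to retractability via Lemma~\ref{CCP} and induction is exactly the paper's (implicit) first move, and your identification of the main obstacle --- that the normal elementary abelian subgroup $T_1$ supplied by Theorem~\ref{squarefreedegree} need not be a full Sylow subgroup of $\mathcal G(X,r)$, because $p_1$ can reappear in higher factors $K_i/K_{i-1}$ through $\Aut(\Z/(p_i))$ --- is also correct. But at precisely that point your argument stops: you write that ``the essential work is therefore to show'' that these extra contributions act trivially under $\lambda$, ``producing a genuine abelian normal Sylow subgroup,'' and you give no argument for this. That is the entire content of the theorem, and it is left as a declaration of intent. Moreover, the paper does not close the gap the way you predict: it never exhibits an abelian normal Sylow subgroup of the putative counterexample. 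Instead it argues by contradiction. Lemma~\ref{key} is used in the contrapositive to conclude $T_1\neq P_1$, whence there is an index $i\geq 2$ with $p_1\mid p_i-1$ and an element $h\in(P_1\cap K_i)\setminus K_{i-1}$. The chain of $\lambda$-trivialities (\ref{T1g}) and (\ref{keylambda}), the latter obtained from Lemma~\ref{multi2} applied to the quotient solution, forces $\lambda_h(\eta_{i,x})=\eta_{i,x}$, hence each $\eta_{i,x}$ normalizes $(P_1\cap K_i)K_{i-1}/K_{i-1}$; coprimality of the two normal subgroups then makes $h$ commute with $T_i$ modulo $K_{i-1}$, and this contradicts the fact (Remark~\ref{Galois}) that $\nu(T_i)\cong\Z/(p_i)$ is self-centralizing in the primitive image $\nu(K_i)$ of degree $p_i$. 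This final group-theoretic pivot --- commutation against self-centralization in a degree-$p_i$ primitive quotient --- is the missing idea in your proposal.

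A second, more technical gap: to invoke the inductive hypothesis and Lemma~\ref{multi2} you need the orbit space $\mathcal S_1$ of $T_1$ to carry a solution $(\mathcal S_1,s)$ such that $(X,r)\to(\mathcal S_1,s)$ is an epimorphism of solutions. Since $T_1$ is in general neither an ideal of the left brace $\mathcal G(X,r)$ nor equal to $P_1$, this is not automatic; the paper must prove well-definedness of $s$ and the YBE for it, using $\lambda_t(g)=g$ for $t\in T_1$ and $g\in P_2+\dots+P_n+M$ together with the cabling-type device of an integer $k$ with $k\equiv 0 \bmod |P_1|$ and $k\equiv 1 \bmod |P_2+\dots+P_n+M|$ and Lemma~\ref{lambdasum}. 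Your proposal passes over this construction entirely.
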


\begin{proof}
    Suppose that the result is not true. Let $n$ be the smallest positive integer such that
    there exists an indecomposable irretractable solution $(X,r)$ of the YBE of cardinality
    $p_1\cdots p_n$ for some distinct prime numbers $p_1,\dots ,p_n$.
    By \cite[Theorem 2.13]{ESS}, $n>1$. For every $i=1,\dots ,n$, let $P_i$ be the Sylow
    $p_i$-subgroup of the additive group of the left brace $\mathcal{G}(X,r)$. Let $M$ be the
    Hall $\{ p_1,\dots,p_n\}'$-subgroup of the additive group of the left brace
    $\mathcal{G}(X,r)$. We have that $P_i$ and $M$ are left ideals of the left brace $\mathcal{G}(X,r)$,
    $$\mathcal{G}(X,r)=P_1+\dots +P_n+M=P_1\cdots P_nM,$$
    $P_iP_j=P_i+P_j=P_jP_i$ and $P_iM=P_i+M=MP_i$.
    For every $x\in X$ there exist unique $\eta_{i,x}\in P_i$ and $m_x\in M$ such that
    $$\sigma_x=\eta_{1,x}+\dots +\eta_{n,x}+m_x.$$
    By (\ref{Lemma 2.1}), we have that
    $$\sigma_{g(x)}=\lambda_g(\sigma_x)=\lambda_g(\eta_{1,x})+\dots \lambda_g(\eta_{n,x})+\lambda_g(m_x)$$
    for all $g\in \mathcal{G}(X,r)$ and $x\in X$. Hence
    \begin{equation} \label{lambda} \lambda_g(\eta_{i,x})=\eta_{i,g(x)} \quad\mbox{and}\quad \lambda_g(m_x)=m_{g(x)} \end{equation}
    for all $i=1,\dots ,n$ and $x\in X$.

    By Theorem \ref{squarefreedegree}, we may assume that there exist
    normal subgroups
    $$\{ 1\}=K_0\subseteq T_1\subseteq K_1\subseteq T_2\subseteq  K_2\subseteq \dots\subseteq T_n\subseteq  K_n=\mathcal{G}(X,r)$$
    of $\mathcal{G}(X,r)$ such that for every $i=1,\dots ,n$,
    \begin{itemize}
        \item[(i)]   $K_i/K_{i-1}$ is isomorphic to a
        subgroup of
        $$\left(\Z/(p_i)\rtimes\Aut(\Z/(p_i))\right)^{p_{i+1}\cdots p_n},$$
        $p_i$ is a divisor of $|K_i/K_{i-1}|$ and
        $T_i/K_{i-1}$ is the Sylow $p_{i}$-subgroup of $K_i/K_{i-1}$,
        \item[(ii)] $K_i=\{ g\in \mathcal{G}(X,r)\mid g(T_i(x))=T_i(x)$ for all $x\in X\}$, where $T_i(x)=\{ t(x)\mid t\in T_i\}$,
        \item[(iii)] $\mathcal{S}_i=\{ K_i(x)\mid x\in X\}$ is a system of imprimitivity
        of $\mathcal{G}(X,r)$ and $|\mathcal{S}_i|=p_{i+1}\cdots p_n$.
    \end{itemize}
In particular, $T_1$ is a nontrivial normal elementary abelian
$p_1$-subgroup and thus $T_1\subseteq P_1$.

 Let $t\in T_1$ and $g\in P_2+\dots +P_n+M$. We have that
            $$gg^{-1}tg=tg=\lambda_t(g)\lambda^{-1}_{\lambda_t(g)}(t).$$
            Since $g,\lambda_t(g)\in P_2+\dots + P_n+M$, $\lambda^{-1}_{\lambda_t(g)}(t)
            \in P_1$ and $g^{-1}tg\in T_1\subseteq P_1$, it follows that
            \begin{equation}\label{T1g}
           \lambda_t(g)=g
            \end{equation}
 for all $t\in T_1$ and $g\in P_2+\dots +P_n+M$.
        Let $s\colon \mathcal{S}_1\times \mathcal{S}_1\longrightarrow \mathcal{S}_1\times \mathcal{S}_1$ be the map defined by
        $$s(T_1(x),T_1(y))=(\sigma_{T_1(x)}(T_1(y)), \sigma^{-1}_{\sigma_{T_1(x)}(T_1(y))}(T_1(x))),$$
        where $\sigma_{T_1(x)}(T_1(y))=T_1((\sigma_x-\eta_{1,x})(y))$ for all $x,y\in X$.
Let $t_1,t_2\in T_1$. For every $x\in X$, let $\eta_x=\sigma_x-\eta_{1,x}$.
So
\begin{equation} \label{T1} T_1(\eta_x (y))= \sigma_{T_{1}(x)}(T_{1}(y)).
\end{equation}
Note that $\eta_x\in P_2+\dots +P_n+M$ and by
 (\ref{lambda}) we get
        \begin{align*}
            \eta_{t_1(x)}(t_2(y))
        &=\eta_{t_1(x)}t_2\eta_{t_1(x)}^{-1}\eta_{t_1(x)}(y)\\
        &=\eta_{t_1(x)}t_2\eta_{t_1(x)}^{-1}(\lambda_{t_1}(\eta_{x}))(y)\\
        &=\eta_{t_1(x)}t_2\eta_{t_1(x)}^{-1}\eta_{x}(y)\in T_1(\eta_{x}(y))
        \end{align*}
        for all $x,y\in X$. Hence $s$ is well-defined.

        We claim that $(\mathcal{S}_1,s)$ is an indecomposable solution of the YBE of cardinality $p_2\cdots p_n$.
        Let $k$ be a positive integer such that
        $$ k\equiv 0\mod |P_1|\quad\mbox{and}\quad  k\equiv 1 \mod |P_2+\dots +P_n+M|.$$
        Note that in the left brace $G(X,r)$,
        \begin{equation}\label{kk} \lambda_{kx}(ky)=k\lambda_{kx}(y)
        \end{equation}
(because $\lambda_{g}\in\Aut(G(X,r),+)$ for $g\in G(X,r)$)
        for all $x,y\in X$. Let $kX=\{ kx \mid x\in X\}$. Let $(G(X,r),r_G)$ be the solution of the YBE associated
        to the left brace $G(X,r)$. Let $r'\colon kX\times kX\longrightarrow kX\times kX$ be the restriction of
        $r_G$ to $kX\times kX$. Hence $(kX,r')$ is a solution of the YBE.
        Now we have  that
            \begin{align*}
                k\sigma_x&=k\eta_{1,x}+k\eta_{2,x}+\dots +k\eta_{n,x}+km_x\\
                &=\eta_{2,x}+\dots +\eta_{n,x}+m_x=\sigma_x-\eta_{1,x} =
                \eta_x,
                \end{align*}
        for all $x\in X$.
                Consequently,
        \begin{equation}\label{lambdasigma}
          T_1((k\sigma_x)(y))= \sigma_{T_{1}(x)}(T_1(y)).
        \end{equation}
         From Lemma \ref{lambdasum}, we know that
        \begin{equation}\label{ksigma2}
            \lambda_{kx}(y)=(k\sigma_x)(y)
        \end{equation}
        for all $x,y\in X$.
        To prove that $(\mathcal{S}_1,s)$ is a solution of the YBE it is enough to show
        that
        \begin{equation} \label{sigmaT2} \sigma_{T_1(x)}\sigma_{\sigma^{-1}_{T_1(x)}(T_1(y))}=\sigma_{T_1(y)}\sigma_{\sigma^{-1}_{T_1(y)}(T_1(x))}
            \end{equation}
    for all $x,y\in X$. Let $x,y,u\in X$.  Using  (\ref{lambdasigma}), (\ref{ksigma2}) and (\ref{kk})
      we have
        \begin{align*}
            \sigma_{T_1(x)}\sigma_{\sigma^{-1}_{T_1(x)}(T_1(y))}(T_1(u))&=\sigma_{T_1(x)}\sigma_{T_1((k\sigma_x)^{-1}(y))}(T_1(u))\\
            &=\sigma_{T_1(x)}(T_1(k\sigma_{(k\sigma_x)^{-1}(y)})(u))\\
            &=T_1((k\sigma_{x})(k\sigma_{(k\sigma_x)^{-1}(y)})(u))\\
            &=T_1(\lambda_{kx}\lambda_{k\lambda_{kx}^{-1}(y)}(u))\\
            &=T_1(\lambda_{kx}\lambda_{\lambda_{kx}^{-1}(ky)}(u))
        \end{align*}
        for all $x,y,u\in X$. Similarly we get
        $$\sigma_{T_1(y)}\sigma_{\sigma^{-1}_{T_1(y)}(T_1(x))}(T_1(u))=T_1(\lambda_{ky}\lambda_{\lambda_{ky}^{-1}(kx)}(u))$$
        for all $x,y,u\in X$. Since $(G(X,r),r_G)$ is a solution of the YBE, we have that
        $$\lambda_{kx}\lambda_{\lambda_{kx}^{-1}(ky)}=\lambda_{ky}\lambda_{\lambda_{ky}^{-1}(kx)}$$
        for all $x,y\in X$. Thus (\ref{sigmaT2}) follows.
        Hence $(\mathcal{S}_1,s)$ is a solution of the YBE.

        Note that the map $\psi\colon\mathcal{G}(X,r)\longrightarrow \Sym_{\mathcal{S}_1}$, defined by
        $\psi(g)(T_1(x))=T_1(g(x))$ for all $g\in \mathcal{G}(X,r)$ and $x\in X$, is a homomorphism of groups. Clearly $\ker(\psi)=K_1$.
        Let $x_0\in X$. By Remark \ref{KTP}, we have that
               \begin{align*}
                X&=K_n(x_0)=T_n(x_0)=((T_n\cap P_n)K_{n-1})(x_0)=(T_n\cap P_n)(T_{n-1}(x_0))\\
                &=\dots =((T_n\cap P_n)\cdots (T_2\cap P_2)K_1)(x_0)=((T_n\cap P_n)\cdots (T_2\cap P_2)T_1)(x_0).
                \end{align*}
        Then, for every $x\in X$, there exist $t_i\in T_i\cap P_i$, for $i=1,\dots ,n$,
        such that $x=t_n\cdots t_1(x_0)$.
         Hence $P_2+\dots +P_n=P_2 \cdots P_n$ acts transitively on $\mathcal{S}_1$, and thus $(\mathcal{S}_1,s)$ is indecomposable. Therefore the claim follows.

        By the minimality of $n$, $(\mathcal{S}_1,s)$ is a multipermutation solution.
        Note that
        $\psi(P_2+\dots +P_n+M)=\mathcal{G}(\mathcal{S}_1,s)$. Let $x,y,z\in X$. Then,
        using (\ref{T1}) and the definition of $\psi$,
        we get  $\psi(\eta_x)(T_1(z))=T_1(\eta_x(z))=\sigma_{T_1(x)}(T_1(z))$,
        and thus, from (\ref{lambda}) and (\ref{Lemma 2.1}), it follows that
         \begin{align*}\psi(\lambda_{\eta_x}(\eta_y))(T_1(z))&=\psi(\eta_{\eta_x(y)})(T_1(z))=T_1(\eta_{\eta_x(y)}(z))\\
            &=\sigma_{T_1(\eta_x(y))}(T_1(z))=\sigma_{\sigma_{T_1(x)}(T_1(y))}(T_1(z))\\
            &=\lambda_{\sigma_{T_1(x)}}(\sigma_{T_1(y)})(T_1(z))=(\lambda_{\psi(\eta_x)}(\psi(\eta_y)))(T_1(z)).
        \end{align*}
Hence the map $\psi'\colon P_2+\dots +P_n+M\longrightarrow \mathcal{G}(\mathcal{S}_1,s)$, defined by $\psi'(g)=\psi(g)$ for all $g\in P_2+\dots +P_n+M$, is an epimorphism of the associated solutions of these left braces. So $\psi'$ also is an epimorphism of left braces. Hence $\ker(\psi')=K_1\cap(P_2+\dots +P_n+M)$ is an ideal of $P_2+\dots +P_n+M$.

 Note that $\psi'(P_i)$ is the Sylow $p_i$-subgroup of the
additive group of the left
        brace $\mathcal{G}(\mathcal{S}_1,s)$. We write $\overline{P}_i=\psi'(P_i)$. By Theorem \ref{multi},
        $\mathcal{G}(\mathcal{S}_1,s)=\overline{P}_2\cdots \overline{P}_n$ and
        $\overline{P}_i$ is a trivial brace over an elementary abelian $p_i$-group,
        for all $i=2,\dots ,n$. In particular, $M\subseteq \ker(\psi')\subseteq K_1$.

   Moreover,
        \begin{align*}\{ 1\}&=\psi'(K_1\cap (P_2\cdots P_nM))\subseteq \psi'(T_2\cap (P_2\cdots P_nM))\subseteq  \psi'(K_2\cap (P_2\cdots P_nM))\\
            &\subseteq \dots\subseteq \psi'(T_n\cap (P_2\cdots P_nM))\subseteq  \psi'(K_n\cap (P_2\cdots P_nM))=\mathcal{G}(\mathcal{S}_1,s)
        \end{align*}
        are normal subgroups of $\mathcal{G}(\mathcal{S}_1,s)$ such that, for every $i>1$,
        \begin{itemize}
            \item[(i)] $p_i$ is a divisor of $|\psi'(K_i\cap (P_2\cdots P_nM))/\psi'(K_{i-1}\cap (P_2\cdots P_nM))|$ and $\psi'(T_i\cap (P_2\cdots P_nM))/\psi'(K_{i-1}\cap (P_2\cdots P_nM))$ is the Sylow $p_i$-subgroup of $\psi'(K_i\cap (P_2\cdots P_nM))/\psi'(K_{i-1}\cap (P_2\cdots
            P_nM))$,
            \item[(ii)] $\psi'(K_i\cap (P_2\cdots P_nM))=\{ g\in\mathcal{G}(\mathcal{S}_1,s)\mid g(\psi'(T_i\cap (P_2\cdots P_nM))(A))=\psi'(T_i\cap (P_2\cdots P_nM))(A)$ for all $A\in\mathcal{S}_1\}$.
        \end{itemize}
Therefore, by Lemma \ref{multi2},
$$\psi'(K_i\cap (P_2\cdots P_nM))=\overline{P}_2\cdots \overline{P}_i$$
is an ideal of the left brace $\mathcal{G}(\mathcal{S}_1,s)$ and
\begin{equation}\label{keylambda}
    \lambda_g(h)=h,
\end{equation}
for all $g\in \psi'(K_{i}\cap(P_2\cdots P_nM))$ and all $h\in
\overline{P}_i\cdots \overline{P}_n$.

Since $(X,r)$ is irretractable, by Lemma \ref{key}, $T_1\neq P_1$.
Hence there exists $i\geq 2$ such that $p_i>p_1$ and $P_1\cap
K_i\not\subseteq K_{i-1}$. Note that the structure of the group
$K_1$ implies that $|\overline{P}_i|=|P_i|$. Hence $P_i\cong
\overline{P}_i$ as left braces.
Let $h\in (P_1\cap K_i)\setminus K_{i-1}$. By Remark \ref{KTP},
$$K_i(x)=(T_i\cap P_i)\cdots (T_2\cap P_2)T_1(x)$$
for all $x\in X$. Hence for every $x\in X$ there exist $t_j\in
T_j\cap P_j$, for $j=1,\dots ,i$, such that $h(x)=t_i\cdots
t_1(x)$. By (\ref{lambda}), (\ref{T1g}) and (\ref{keylambda}) we
have that
        \begin{align*}\lambda_h(\eta_{i,x})&=\eta_{i,h(x)}=\eta_{i,t_i\cdots t_2t_1(x)}=\lambda_{t_i\cdots t_2t_1}(\eta_{i,x})\\
            &=\lambda_{t_i\cdots t_2}\lambda_{t_1}(\eta_{i,x})\\
            &=\lambda_{t_i\cdots t_2}(\eta_{i,x})=\eta_{i,x}k,
            \end{align*}
        for some $k\in K_1\cap P_i$. Since $p_i>p_1$, we have that $K_1\cap P_i=\{ 1\}$. Hence
        $$\lambda_h(\eta_{i,x})=\eta_{i,x} $$
        for all $x\in X$.
 Now it follows that $$h\eta_{i,x}=\lambda_h(\eta_{i,x})\lambda^{-1}_{\lambda_h(\eta_{i,x})}(h)=\eta_{i,x}\lambda^{-1}_{\lambda_h(\eta_{i,x})}(h).$$
Therefore
$\eta_{i,x}^{-1}h\eta_{i,x}=\lambda^{-1}_{\lambda_h(\eta_{i,x})}(h)\in
P_1\cap K_i$ (because $h\in P_1\cap K_i$) for all $x\in X$. Since
$P_i$ is a trivial brace, $P_i=\langle\eta_{i,x}\mid x\in
X\rangle_+=\langle\eta_{i,x}\mid x\in X\rangle$. As $h\in
(P_1\cap K_i)\setminus K_{i-1}$ was arbitrary, it follows that
$((P_1\cap K_i)K_{i-1})/K_{i-1}$ is a normal subgroup in
$(P_i(P_1\cap K_i)K_{i-1})/K_{i-1}$.
 Hence $((P_1\cap K_i)K_{i-1})/K_{i-1}$ is normal in $(T_i(P_1\cap
 K_i))/K_{i-1}$. Since $((P_1\cap K_i)K_{i-1})/K_{i-1}$ is a normal $p_1$-subgroup of
 $(T_i(P_1\cap
    K_i))/K_{i-1}$ and $T_i/K_{i-1}$ is a normal $p_i$-subgroup of $(T_i(P_1\cap
    K_i))/K_{i-1}$, we have
 that the elements of $T_i/K_{i-1}$ commute with the elements of
 $((P_1\cap K_i)K_{i-1})/K_{i-1}$. Since $h\notin K_{i-1}$, there exists $x\in X$
 such that $h(K_{i-1}(x))\neq K_{i-1}(x)$.

 Recall that $\mathcal{S}_{i-1}=\{ K_{i-1}(y)\mid y\in X\}$ is a system of imprimitivity of $\mathcal{G}(X,r)$ and
 $|\mathcal{S}_{i-1}|=p_i\cdots p_n$. Since $T_i/K_{i-1}$ is a normal non-trivial $p_i$-subgroup of $\mathcal{G}(X,r)$
 and $\mathcal{G}(X,r)$ acts transitively on $\mathcal{S}_{i-1}$, we have that the orbit
 $\mathcal{D}=\{tK_{i-1}(x)\mid t\in T_i\}$ of $K_{i-1}(x)$ by the action of $T_i$ has cardinal $p_i$, that is
 $|\mathcal{D}|=p_i$. Since $K_i=\{ g\in \mathcal{G}(X,r)\mid gT_i(y)=T_i(y)$ for all $y\in X\}$, $K_i$ acts on $\mathcal{D}$.
 Hence the map $\nu\colon K_i\longrightarrow \Sym_{\mathcal{D}}$, defined by
 $\nu(g)(tK_{i-1}(x))=gtK_{i-1}(x)$, is a well-defined homomorphism of groups.
 Then $\nu(K_i)$ is a solvable transitive group of degree $p_i$, thus $\nu(K_i)$ is primitive and by Remark \ref{Galois},
 $\nu(K_i)\cong \Z/(p_i)\rtimes \langle \xi\rangle$ for some $\xi\in\Aut(\Z/(p_i))$.
 Furthermore, $\nu(T_i)\cong \Z/(p_i)$ is self-centralizing
 in $\nu(K_i)$. Since $h(K_{i-1}(x))\neq K_{i-1}(x)$, we have that $\nu(h)\neq 1$
 and has order a power of $p_1$, because $h\in P_1$. Therefore, $h\in K_i$
 implies that $\nu(h)\in \nu(K_i)\setminus \nu(T_i)$.
 Since $hK_{i-1}tK_{i-1}=tK_{i-1}hK_{i-1}$ for all
 $t\in T_i$ and $K_{i-1}\subseteq \ker(\nu)$, we have that
 $\nu(h)\nu(t)=\nu(t)\nu(h)$ for all $t\in T_i$,
 a contradiction because $\nu(T_i)$ is self-centralizing in $\nu(K_i)$.
Therefore, the result follows.
    \end{proof}

\section{A construction} \label{construction}

Note that, by Corollary \ref{cmulti}, if $(X,r)$ is an
indecomposable multipermutation solution of the YBE of cardinality
$p_1\cdots p_n$, where $p_1,\dots ,p_n$ are $n$ distinct prime
numbers, then $(X,r)$ has multipermutation level $\leq n$.
Moreover, since $\mathcal{G}(X,r)$ is a solvable transitive
subgroup of $\Sym_X$, by Theorem \ref{squarefreedegree},
$p_1\cdots p_n$ is a divisor of $|\mathcal{G}(X,r)|$. We conclude
with an example showing that indecomposable multipermutation
solutions of the YBE of cardinality $p_1\cdots p_n$ and
multipermutation level $n$ indeed exist.

\begin{example} {\rm
    Let $p_1,...,p_{n}$ be different primes.  For all $i=1,2,\dots ,n$, we shall construct
    inductively an indecomposable solution $(X_i,r_i)$ of the YBE of cardinality
    $p_1p_2\cdots p_{i}$, we shall prove that $\Ret(X_{j},r_j)=(X_{j-1},r_{j-1})$
    and $\mathcal{G}(X_j)\cong \Z/(p_j)^{|X_j|}\rtimes_{\alpha} \mathcal{G}(X_{j-1},r_{j-1})$
    for all $2\leq j\leq n$, as left braces, where
    $$\alpha\colon \mathcal{G}(X_{j-1},r_{j-1})\longrightarrow \Aut(\Z/(p_j)^{|X_j|})$$
    is defined by
    $$\alpha(g)((a_x)_{x\in X_{j-1}})=(a_{g^{-1}(x)})_{x\in X_{j-1}}$$
    for $g\in \mathcal{G}(X_{j-1},r_{j-1})$. Note that $\Z/(p_j)^{|X_j|}\rtimes_{\alpha} \mathcal{G}(X_{j-1},r_{j-1})=\Z/(p_j)\wr\mathcal{G}(X_{j-1},r_{j-1})$
    is the wreath product of the trivial brace $\Z/(p_j)$ by the left brace $\mathcal{G}(X_{j-1},r_{j-1})$ (see \cite[Section 3]{CedoSurvey}).
    In particular, $(X_n,r_n)$ will be an indecomposable, multipermutation solution of the YBE of cardinality $p_1\cdots p_n$ and multipermutation level $n$. Furthermore
    $$\mathcal{G}(X_n,r_n)\cong \Z/(p_n)\wr (\Z/(p_{n-1})\wr(\dots (\Z/(p_2)\wr\Z/(p_1))\dots))$$
    as left braces  and
    $|\mathcal{G}(X_n,r_n)|=p_1p_2^{|X_1|}\cdots p_n^{|X_{n-1}|}=p_1p_2^{p_1}\cdots p_n^{p_1\cdots p_{n-1}}$.

    Let $|X_1|=p_1$ and let
    $(X_1,r_1)$ be an indecomposable solution of the YBE. From Theorem~2.13
    in \cite{ESS} we know that
    $G_1 = \mathcal G (X_1,r_1) = \mathbb{Z}/(p_1)$.

    Suppose that $1<j\leq n$ and that we have constructed indecomposable solutions
    $(X_{i},r_{i})$ of the YBE of cardinality $p_1\cdots p_{i}$, for all $1\leq i<j$, such that if $i>1$, then $\Ret(X_{i},r_{i})=(X_{i-1},r_{i-1})$   and $\mathcal{G}(X_{i})\cong \Z/(p_{i})^{|X_{i-1}|}\rtimes_{\alpha} \mathcal{G}(X_{i-1},r_{i-1})$, as left braces, where
    $$\alpha\colon \mathcal{G}(X_{i-1},r_{i-1})\longrightarrow \Aut(\Z/(p_{i})^{|X_{i-1}|})$$
    is defined  by
    $$\alpha(g)((a_x)_{x\in X_{i-1}})=(a_{g^{-1}(x)})_{x\in X_{i-1}},$$
    for all $g\in \mathcal{G}(X_{i-1},r_{i-1})$ and $a_x\in \Z/(p_i)$.
    Let $X_{j}=\Z/(p_j)\times X_{j-1}$. We define $r_j\colon X_j\times X_j\longrightarrow X_j\times X_j$ by
    $r_j((a,x),(b,y))=(\sigma_{(a,x)}(b,y),\sigma^{-1}_{\sigma_{(a,x)}(b,y)}(a,x))$ for all $(a,x),(b,y)\in X_j$, where
    $\sigma_{(a,x)}(b,y)=(b+\delta_{x,\sigma_x(y)},\sigma_x(y))$
    and the permutations $\sigma_x$ correspond to the solution $(X_{j-1},r_{j-1})$.
    Then $\sigma_{(a,x)}^{-1} ((b,y)) = (b-\delta_{x,y},\sigma_{x}^{-1}(y))$, so that for $(c,z)\in X_{j}$
    we get
    \begin{align*}
        \sigma_{(a,x)} \sigma_{\sigma_{(a,x)}^{-1} (b,y)} (c,z) & =
        \sigma_{(a,x)} \sigma_{(b-\delta_{x,y},\sigma_{x}^{-1} (y))}
        (c,z)\\
        & =  \sigma_{(a,x)}(c+\delta_{\sigma_{x}^{-1}(y),\sigma_{\sigma_{x}^{-1} (y)} (z)}, \sigma_{\sigma_{x}^{-1} (y)} (z))\\
        & = ( c+\delta_{\sigma_{x}^{-1}(y),\sigma_{\sigma_{x}^{-1} (y)} (z)}+\delta_{x,\sigma_x \sigma_{\sigma_{x}^{-1} (y)} (z)},\sigma_x \sigma_{\sigma_{x}^{-1} (y)} (z)) \\
        & = ( c+\delta_{y,\sigma_x\sigma_{\sigma_{x}^{-1} (y)} (z)}+\delta_{x,\sigma_x \sigma_{\sigma_{x}^{-1} (y)} (z)},\sigma_x \sigma_{\sigma_{x}^{-1} (y)} (z)).
    \end{align*}
    Since $\sigma_x \sigma_{\sigma_{x}^{-1} (y)}=\sigma_y \sigma_{\sigma_{y}^{-1}
        (x)}$ (because $(X_{j-1},r_{j-1})$ is a solution of the
    YBE by the inductive hypothesis), by a symmetric calculation this is equal to
    $$\sigma_{(b,y)} \sigma_{\sigma_{(b,y)}^{-1} (a,x)} (c,z).$$
    Hence, $(X_{j}, r_{j})$ is a solution of the YBE. Note that by the definition of the $\sigma_{(a,x)}$ we have that $\sigma^{-1}_{(a,x)}=\sigma^{-1}_{(b,y)}$ if and only if $\sigma^{-1}_x=\sigma^{-1}_y$ and $\delta_{x,z}=\delta_{y,z}$ for all $z\in X_{j-1}$. Hence $\sigma^{-1}_{(a,x)}=\sigma^{-1}_{(b,y)}$ if and only if $x=y$. Let $f\colon\Ret(X_j,r_j)\longrightarrow (X_{j-1},r_{j-1})$ be the map defined by $f(\overline{(a,x)})=x$ for all $(a,x)\in X_j$. We have seen that $f$ is bijective. Note that
    \begin{align*}
        f(\sigma_{\overline{(a,x)}}(\overline{(b,y)})) & =f(\overline{\sigma_{(a,x)}(b,y)})\\
       & =f(\overline{(b+\delta_{(x,\sigma_x(y))},\sigma_x(y))})\\
        &=\sigma_x(y)=\sigma_{f(\overline{(a,x)})}(f(\overline{(b,y)}))
    \end{align*}
for all $(a,x),(b,y)\in X_j$. Hence $f$
     is an isomorphism of solutions.

     Let $G_j=\Z/(p_{j})^{|X_{j-1}|}\rtimes_{\alpha} \mathcal{G}(X_{j-1},r_{j-1})$, where
     $$\alpha\colon \mathcal{G}(X_{j-1},r_{j-1})\longrightarrow \Aut(\Z/(p_{j})^{|X_{j-1}|})$$
     is defined  by
     $$\alpha(g)((a_x)_{x\in X_{j-1}})=(a_{g^{-1}(x)})_{x\in X_{j-1}},$$
     for all $g\in \mathcal{G}(X_{j-1},r_{j-1})$ and $a_x\in \Z/(p_j)$.
     Let $\phi\colon G_j\longrightarrow \Sym_{X_j}$ be the map defined by

        \begin{equation*}
     \phi((a_x)_{x\in X_{j-1}}, g)(b,y)=(b+a_{g(y)},g(y))
     \end{equation*}
     for all $((a_x)_{x\in X_{j-1}}, g)\in G_j$ and all $(b,y)\in X_j$. Note that
     \begin{align*}
        \phi(((a_x)_{x\in X_{j-1}}, g)&((b_x)_{x\in X_{j-1}}, h))(c,z)\\
        &=\phi(((a_x)_{x\in X_{j-1}}+(b_{g^{-1}(x)})_{x\in X_{j-1}}, gh))(c,z) \nonumber\\
        &=\phi(((a_x+b_{g^{-1}(x)})_{x\in X_{j-1}}, gh))(c,z)  \nonumber \\
       & =(c+a_{gh(z)}+b_{g^{-1}(gh(z))},gh(z))=(c+a_{gh(z)}+b_{h(z)},gh(z))  \nonumber \\
        &=\phi((a_x)_{x\in X_{j-1}}, g)(c+b_{h(z)},h(z))  \nonumber \\
       & =\phi((a_x)_{x\in X_{j-1}}, g)\phi((b_x)_{x\in
       X_{j-1}},h)(c,z).  \nonumber
     \end{align*}
     Hence $\phi$ is a homomorphism of groups. Let $((a_x)_{x\in X_{j-1}}, g)\in \ker(\phi)$. Then
     $$((a_x)_{x\in X_{j-1}}, g)(b,y)=(b+a_{g(y)},g(y))=(b,y)$$
     for all $(b,y)\in X_j$. Thus $g=\id$ and $a_x=0$ for all $x\in X_{j-1}$. It follows that $\phi$ is injective. Clearly $\phi(G_j)$ acts trasitively on $X_j$. Note that
     $$G_j=\langle ((\delta_{x,y})_{x\in X_{j-1}},\sigma_y)\mid y\in X_{j-1}\rangle$$
     and
     \begin{equation} \label{phi}
     \phi((\delta_{x,y})_{x\in X_{j-1}},\sigma_y)(c,z)=(c+\delta_{\sigma_y(z),y},\sigma_y(z))=\sigma_{(b,y)}(c,z)
     \end{equation}
     for all $b\in \Z/(p_j)$, $y\in X_{j-1}$ and $(c,z)\in X_j$. Hence $\phi(G_j)=\mathcal{G}(X_j,r_j)$. Since $\phi(G_j)$ acts transitively on $X_j$, we have that $(X_j,r_j)$ is indecomposable. Note that
     \begin{align} \label{lam}
     \lambda_{((\delta_{x,y})_{x\in X_{j-1}},\sigma_y)}&((\delta_{x,z})_{x\in X_{j-1}},\sigma_z)\\
        &=-((\delta_{x,y})_{x\in X_{j-1}},\sigma_y)+((\delta_{x,y})_{x\in X_{j-1}},\sigma_y)((\delta_{x,z})_{x\in X_{j-1}},\sigma_z)  \nonumber \\
        &=-((\delta_{x,y})_{x\in X_{j-1}},\sigma_y)+((\delta_{x,y})_{x\in X_{j-1}}+(\delta_{\sigma^{-1}_y(x),z})_{x\in X_{j-1}},\sigma_y\sigma_z) \nonumber \\
        &=((\delta_{\sigma^{-1}_y(x),z})_{x\in X_{j-1}},- \sigma_y+\sigma_y\sigma_z) \nonumber  \\
       & =((\delta_{x,\sigma_y(z)})_{x\in X_{j-1}},\lambda_{\sigma_y}(\sigma_z))\nonumber  \\
        &=((\delta_{x,\sigma_y(z)})_{x\in
        X_{j-1}},\sigma_{\sigma_y(z)}), \nonumber
        \end{align}
     the last equality by (\ref{Lemma 2.1}).  Hence, by the definition of $\sigma_{(a,y)}$ and
     applying (\ref{phi}) and (\ref{lam}),  we get
     \begin{align*}\phi(\lambda_{((\delta_{x,y})_{x\in X_{j-1}},\sigma_y)}((\delta_{x,z})_{x\in X_{j-1}},\sigma_z))&=
     \sigma_{(b,\sigma_y(z))}=\sigma_{\sigma_{(a,y)}(b-\delta_{y,\sigma_y(z)},z)}\\
        &=\lambda_{\sigma_{(a,y)}}(\sigma_{(b-\delta_{y,\sigma_y(z)},z)})\\
        &=\lambda_{\phi((\delta_{x,y})_{x\in X_{j-1}},\sigma_y)}(\phi((\delta_{x,z})_{x\in X_{j-1}},\sigma_z)),
        \end{align*}
        the third equality by (\ref{Lemma 2.1}),
     and thus $G_j\cong \mathcal{G}(X_j,r_j)$ as left braces. Therefore the claim follows by induction.
    }
    \end{example}

 We conclude with the following comments. Let $p_1,p_2$ be
prime numbers such that $p_2\equiv 1\mod p_1$. Let $g\in
\Z/(p_2)\setminus\{ 0\}$ be an element of multiplicative order
$p_1$. Let $G=\Z/(p_2)\rtimes_{\alpha}\Z/(p_1)$ be the semidirect
product of the trivial braces $\Z/(p_2)$ and $\Z/(p_1)$, where
$\alpha\colon \Z/(p_1)\longrightarrow \Aut(\Z/(p_2))$ is defined
by $\alpha(a)(b)=g^{a}b$, for all $a\in\Z/(p_1)$ and
$b\in\Z/(p_2)$. Then, by \cite[Theorem 3.3]{Ramirez}, there exists
an indecomposable multipermutation solution $(X,r)$ of the YBE of
cardinality $p_1p_2$, multipermutation level $2$ and such that
$\mathcal{G}(X,r)\cong G$ as left braces.

However, this cannot be generalized for more than two primes, that
is, if $p_1,p_2,p_3$ are three distinct prime numbers and $(X,r)$
is an indecomposable multipermutation solution of the YBE of
cardinality $p_1p_2p_3$ and multipermutation level $3$, then
$|\mathcal{G}(X,r)|>p_1p_2p_3$. Indeed, let $P_i$ be the Sylow
$p_i$-subgroup of the additive group of the left brace
$\mathcal{G}(X,r)$. Suppose that $|P_i|=p_i$ for all $i$. By
Theorem \ref{multi}, we may assume that $P_3$ and $P_3P_2$ are
ideals of the left brace $\mathcal{G}(X,r)$. Then
$\mathcal{G}(X,r)=P_3(P_2P_1)$ is an inner semidirect product of
the ideal $P_3$ and the left ideal $P_2P_1$. Hence,
(\ref{semidirect1}) implies  that $\lambda_{c}(ab)=ab$, for all
$a\in P_1$, $b\in P_2$ and $c\in P_3$. Since $P_3$ is a trivial
brace, we have that $P_3\subseteq \soc(\mathcal{G}(X,r))$. Since
$P_3P_2$ is an ideal of $\mathcal{G}(X,r)$ and
$P_2P_1\cong\mathcal{G}(X,r)/P_3$, we have that $P_2$ is an ideal
of $P_2P_1$. Thus $aba^{-1}b^{-1}\in P_2$, for all $a\in P_1$ and
$b\in P_2$. Since $|P_3|=p_3$, $\Aut(P_3)$ is  abelian. Hence, as
$P_3$ is a left ideal of $\mathcal{G}(X,r)$, we have that
    \begin{equation}\label{com1}
        \lambda_{aba^{-1}b^{-1}}(c)=\lambda_a\lambda_b\lambda^{-1}_a\lambda^{-1}_b(c)=c
    \end{equation}
for all $a\in P_1$, $b\in P_2$ and $c\in P_3$. Since $P_2$ and
$P_3$ are trivial braces and left ideals of $\mathcal{G}(X,r)$ we
have
\begin{equation}\label{com2}
    \lambda_{aba^{-1}b^{-1}}(a')=\lambda_a\lambda_b\lambda^{-1}_a(\lambda^{-1}_b(a'))=
    \lambda_a (\lambda_b\lambda^{-1}_b(a'))= \lambda_b\lambda^{-1}_b(a') = a'
\end{equation}
and similarly
\begin{equation}\label{com3}
    \lambda_{aba^{-1}b^{-1}}(b')=\lambda_a\lambda_b\lambda^{-1}_a\lambda^{-1}_b(b')=
    \lambda_a\lambda_b(\lambda^{-1}_a(b)) =
    \lambda_a\lambda^{-1}_a(b')=b'
\end{equation}
for all $a,a'\in P_1$ and $b,b'\in P_2$. By (\ref{com1}),
(\ref{com2}) and (\ref{com3}), we get that
$aba^{-1}b^{-1}\in\soc(\mathcal{G}(X,r))\cap P_2$ for all $a\in
P_1$ and $b\in P_2$. Thus  either $[P_1,P_2]$ is the trivial
group, so $P_1P_2$ is abelian, or $\soc(\mathcal{G}(X,r))$
contains an element of order $p_2$, so it must contain $P_2$.
Since the map $f\colon \Ret(X,r)\longrightarrow \mathcal{G}(X,r)$
defined by $f(\overline{x})=\sigma_x$, for all $x\in X$, is an
injective homomorphism of solutions from $\Ret(X,r)$ into the
solution associated to the left brace $\mathcal{G}(X,r)$ and
$\mathcal{G}(X,r)=\langle \sigma_x\mid x\in X\rangle$, there is an
injective homomorphism of solutions $\overline{f}\colon
\Ret^2(X,r)\longrightarrow
\mathcal{G}(X,r)/\soc(\mathcal{G}(X,r))$ from $\Ret^2(X,r)$ into
the solution associated to the left brace
$\mathcal{G}(X,r)/\soc(\mathcal{G}(X,r))$.  Note that if $P_2P_1$
is abelian, then $P_2P_1$ is an inner direct product of its ideals
$P_1$ and $P_2$. Since $P_1$ and $P_2$ are trivial braces,
(\ref{semidirect1}) implies that $P_2P_1$ is a trivial brace.
Hence, as $P_3\subseteq \soc(\mathcal{G}(X,r))$, we get that
$\mathcal{G}(X,r)/\soc(\mathcal{G}(X,r))$ is a trivial brace, in
this case. If $P_3P_2\subseteq \soc(\mathcal{G}(X,r))$, then, as
$P_1$ is a trivial brace, we also have that
$\mathcal{G}(X,r)/\soc(\mathcal{G}(X,r))$ is a trivial brace.
Hence, in both cases $\Ret^2(X,r)$ is a trivial solution. Since
the natural map $(X,r)\longrightarrow \Ret^2(X,r)$ is an
epimorphism of solutions, by Lemma \ref{CCP}, $\Ret^2(X,r)$ is
indecomposable and thus it has cardinality $1$, a contradiction
because $(X,r)$ has multipermutation level $3$. Therefore,
$|\mathcal{G}(X,r)|>p_1p_2p_3$.

Now it is easy to see that for every integer $n>2$ and $n$
distinct  prime numbers $p_1,\dots p_n$, if $(X,r)$ is an
indecomposable multipermutation solution of the YBE of cardinality
$p_1\cdots p_n$ and multipermutation level $n$, then
$|\mathcal{G}(X,r)|>p_1\cdots p_n$.

\vspace{30pt}
 \noindent \begin{tabular}{llllllll}
  F. Ced\'o && J. Okni\'{n}ski \\
 Departament de Matem\`atiques &&  Institute of
Mathematics \\
 Universitat Aut\`onoma de Barcelona &&   University of Warsaw\\
08193 Bellaterra (Barcelona), Spain    &&  Banacha 2, 02-097 Warsaw, Poland \\
 cedo@mat.uab.cat && okninski@mimuw.edu.pl\\

\end{tabular}


\begin{thebibliography}{99}
\itemsep=-2pt
\bibitem{BCJ}  D. Bachiller, F. Ced\'o and E. Jespers, Solutions of
the Yang--Baxter equation associated with a left brace,  J.
Algebra 463 (2016), 80--102.
\bibitem{BCV} D. Bachiller, F. Ced\'o and L. Vendramin,  A characterization
of finite multipermutation solutions of the Yang--Baxter equation,
Publ. Mat. 62 (2018), 641--649.
\bibitem{Baxter} R. J. Baxter, Partition function of the eight-vertex lattice model,
 Annals of Physics, 70 (1972), 193--228.
\bibitem{Mora-Sastriq}    S. Camp-Mora and R. Sastriques, A
criterion for decomposabilty in QYBE, International Mathematics
Research Notices, rnab357, https://doi.org/10.1093/imrn/rnab357.
\bibitem{CCP} M. Castelli, F. Catino and G. Pinto, Indecomposable involutive set-theoretic
solutions of the Yang--Baxter equation, J. Pure and Appl. Algebra,
223 (2019), 4477--4493.
\bibitem{CCP2020} M. Castelli, F. Catino and G. Pinto, Indecomposable involutive
set-theoretic solutions of the Yang--Baxter equation and
orthogonal dynamical extensions of cycle sets, arXiv:2011.10083.
\bibitem{CPR} M. Castelli, G. Pinto and W. Rump,  On the indecomposable involutive set-theoretic
solutions of the Yang--Baxter equation of prime-power size, Comm.
Algebra 48 (2020), 1941--1955.
\bibitem{CedoSurvey} F. Ced\'o,  Left braces: solutions of the Yang--Baxter equation,
Adv. Group Theory Appl. 5 (2018), 33--90.
\bibitem{CJKVAV} F. Ced\'o, E. Jespers, {\L}. Kubat, A. Van Antwerpen and C. Verwimp,
On various types of nilpotency of the structure monoid and group
of a set-theoretic solution of the Yang--Baxter equation, J. Pure
Appl. Algebra 227 (2023), no. 2, Paper No. 107194, 38 pp.
\bibitem{CJOComm} F. Ced\'o, E. Jespers and J. Okni\'nski, Braces and the Yang--Baxter
equation, Commun. Math. Phys. 327 (2014), 101--116.
\bibitem{CJOprimit} F. Ced\'o, E. Jespers
and J. Okni\'{n}ski, Primitive set-theoretic solutions of the
Yang--Baxter equation, Comm. Contemp. Math. Vol. 24, No. 9 (2022)
2150105, 10 pp.
\bibitem{CO21} F. Ced\'o and J. Okni\'{n}ski, Constructing finite simple solutions of
the Yang--Baxter equation, Adv. Math. 391 (2021), 107968, 39 pp.
\bibitem{CO22}  F. Ced\'o and J. Okni\'{n}ski, New simple solutions of the Yang--Baxter
equation and solutions associated to simple left braces, J.
Algebra 600 (2022), 125--151.
\bibitem{drinfeld} V. G. Drinfeld, On some unsolved problems in quantum
group theory. Quantum Groups, Lecture Notes Math. 1510,
Springer-Verlag, Berlin, 1992, 1--8.
\bibitem{ESS} P. Etingof, T. Schedler and A.
Soloviev, Set-theoretical solutions to the quantum Yang--Baxter
equation, Duke Math. J. 100 (1999), 169--209.
\bibitem{GIC}   T. Gateva-Ivanova and P. Cameron, Multipermutation solutions
of the Yang--Baxter equation, Commun. Math. Phys. 309 (2012),
583--621.
\bibitem{GIVdB} T. Gateva-Ivanova and M. Van den Bergh,
Semigroups of $I$-type, J. Algebra 206 (1998), 97--112.
\bibitem{H} B. Huppert, Endliche Gruppen I, Spriger-Verlag, Berlin
1967.
\bibitem{Jedl-Pilit-Zam} P. Jedlicka, A. Pilitowska and  A.
Zamojska-Dzienio, Indecomposable involutive solutions of the
Yang--Baxter equation of multipermutation level 2 with abelian
permutation group, Forum Math. 33 (2021), 1083--1096.
\bibitem{Jedl-Pilit-Zam2021} P. Jedlicka, A. Pilitowska and  A.
Zamojska-Dzienio, Cocyclic braces and indecomposable cocyclic
solutions of the Yang--Baxter equation,  Proc. Amer. Math.
Soc. 150 (2022), 4223--4239.
\bibitem{K} C. Kassel, Quantum Groups, Springer-Verlag, 1995.
\bibitem{lebed_and_co} V. Lebed, S. Ramirez and L.
Vendramin, Involutive Yang--Baxter: cabling, decomposability,
Dehornoy class, arXiv: 2209.02041.
\bibitem{Ramirez} S. Ram\'{i}rez, Indecomposable solutions of the Yang--Baxter
equation with permutation group of sizes $pq$ and $p^2q$,
arXiv: 2208.06741.
\bibitem{ramirez-vendramin}  S. Ram\'{i}rez and L. Vendramin, Decomposition
Theorems for Involutive Solutions to the Yang--Baxter Equation,
International Mathematics Research Notices, Vol. 2022, Issue 22,
Nov. 2022, pages 18078--18091.
\bibitem{Rump1} W. Rump, A decomposition theorem for square-free
unitary solutions of the quantum Yang--Baxter equation, Adv. Math.
193 (2005), 40--55.
\bibitem{R07} W. Rump,
Braces, radical rings, and the quantum Yang--Baxter equation, J.
Algebra 307 (2007), 153--170.
\bibitem{RumpSurvey} W. Rump,  The brace of a classical group, Note Mat. 34 (2014),
no. 1, 115--144.
\bibitem{rump2020} W. Rump, Classification of indecomposable involutive set-theoretic
solutions to the Yang--Baxter equation, Forum Math. 32 (2020), no.
4, 891--903.
\bibitem{Rump2020}  W. Rump, One-generator braces and indecomposable set-theoretic
solutions to the Yang--Baxter equation, Proc. Edinb. Math. Soc.
(2) 63 (2020), no. 3, 676--696.
\bibitem{rump2022}  W. Rump, Classification of non-degenerate involutive
set-theoretic solutions to the Yang--Baxter equation with
multipermutation level two, Algebras and Representation Theory 25
(2022), 1293--1307.
\bibitem{Smok} A. Smoktunowicz, On Engel groups, nilpotent groups,
rings, braces and the Yang--Baxter equation, Trans. Amer. Math.
Soc. 370 (2018), 6535--6564.
\bibitem{Smokt}  A. Smoktunowicz, A
note on set-theoretic solutions of the Yang--Baxter equation, J.
Algebra 500 (2018), 3--18.
\bibitem{SmokSmok} Agata Smoktunowicz and Alicja Smoktunowicz, Set-theoretic solutions
of the Yang--Baxter equation and new classes of $R$-matrices,
Linear Algebra and its Applications 546 (2018), 86--114.
\bibitem{V} L. Vendramin, Extensions of set-theoretic solutions of the Yang--Baxter
equation and a conjecture of Gateva-Ivanova, J. Pure Appl. Algebra
220 (2016), 2064--2076.
\bibitem{Yang} C.-N. Yang, Some exact results for the many-body problem in one
dimension with repulsive delta-function interaction, Physical
Review Letters, 19 (1967), 1312.
\end{thebibliography}
\end{document}